\newtheorem{thm}{Theorem}[section]
\newtheorem{cor}[thm]{Corollary}
\newtheorem{prop}[thm]{Proposition}
\newtheorem{lem}[thm]{Lemma}
\theoremstyle{definition}
\newtheorem{defn}[thm]{Definition}
\newtheorem{rem}[thm]{Remark}
\newtheorem{exas}[thm]{Examples}
\newcommand{\smsum}{\textstyle\sum\limits}
\begin{document}

\title{Simpleness of Leavitt Path Algebras with Coefficients in a
  Commutative Semiring}

\author{Y.~Katsov$^{1}$, T.\,G.~Nam$^{2}$, J.~Zumbr\"{a}gel$^{3}$\\
{\footnotesize katsov@hanover.edu; tgnam@math.ac.vn; jens.zumbragel@ucd.ie}\\
$^{1}$\footnotesize{Department of Mathematics}\\[-1.5mm]
\footnotesize{Hanover College, Hanover, IN 47243--0890, USA}\\
$^{2}${\footnotesize Institute of Mathematics, VAST}\\[-1.5mm]
{\footnotesize 18 Hoang Quoc Viet, Cau Giay, Hanoi, Vietnam}\\
$^{3}${\footnotesize Institute of Algebra}\\[-1.5mm]
{\footnotesize Dresden University of Technology, Germany}}

\date{}

\maketitle

\begin{abstract}
  In this paper, we study ideal- and congruence-simpleness for the
  Leavitt path algebras of directed graphs with coefficients in a
  commutative semiring~$S$, as well as establish some fundamental
  properties of those algebras. We provide a complete characterization
  of ideal-simple Leavitt path algebras with coefficients in a
  semifield~$S$ that extends the well-known characterizations when the
  ground semiring~$S$ is a field. Also, extending the well-known
  characterizations when~$S$ is a field or commutative ring, we
  present a complete characterization of congruence-simple Leavitt
  path algebras over row-finite graphs with coefficients in a
  commutative semiring~$S$.\medskip

  \textbf{Mathematics Subject Classifications}: 16Y60, 16D99, 16G99,
  06A12; 16S10, 16S34.

  \textbf{Key words}: Congruence-simple and ideal-simple semirings,
  Leavitt path algebra.%
  \footnote{%
    The second author is supported by the Vietnam National Foundation
    for Science and Technology Development (NAFOSTED).  The third
    author is supported by the Irish Research Council under Research
    Grant ELEVATEPD/2013/82.}
\end{abstract}


\section{Introduction}

In some way, ``prehistorical'' beginning of Leavitt path algebras
started with Leavitt algebras (\cite{leav:tmtoar} and \cite{leav:tmtohi}),
Bergman algebras (\cite{bergman:casurc}), and graph C$^{\ast}$-algebras 
(\cite{cunt:acgby}), considering rings with the \textit{Invariant
  Basis Number} property, universal ring constructions, and the
structure of a separable simple infinite C$^{\ast}$-algebra,
respectively.  As to the algebraic structures known as \textit{Leavitt
  path algebras} themselves, they were initiated and developed
independently, and using different approaches, in the foundational
papers on the subject \cite{ap:tlpaoag05} and \cite{amp:nktfga}. Then,
during the last decade, these algebras have continuously been of
significant interest to mathematicians from different areas of
mathematics such as ring and group theorists, analysts working in
C$^{\ast}$-algebras, and symbolic dynamicists, for example. For a
detailed history and overview of the Leavitt path algebras we refer
our potential readers to a recent quite remarkable and motivating
survey on the subject~\cite{a:lpatfd}.

In our time, we may clearly observe a steadily growing interest in
developing algebraic and homological theories of semirings and
semimodules, as well as in their numerous connections with, and
applications in, different branches of mathematics, computer science,
cryptography, quantum physics, and many other areas of science (see,
\textit{e.g.}, \cite{gla:agttlosataimais}). As is well known,
structure theories for varieties of algebras constitute an important
``classical'' area of the sustained interest in algebraic research. In
those theories, so-called simple algebras, \textit{i.e.}, algebras
possessing only two trivial congruences -- the identity and universal
ones -- play a very important role of ``building blocks.'' In
addition, simple semirings, constituting another booming area of
semiring research, have quite interesting and promising applications
in various fields, in particular in cryptography (see, \textit{e.g.},
\cite{mmr:pkcbosa}). However, in contrast to the varieties of groups
and rings, research on simple semirings has been started only
recently, and therefore not much on the subject is known. Also,
investigating semirings and their representations, one should
undoubtedly use methods and techniques of both ring and lattice theory
as well as diverse techniques and methods of categorical and universal
algebra, and work in a ``nonabelian environment.''  Perhaps all these
circumstances explain why research on simple semirings is still behind
of that for rings and groups (for some recent activity and results on
this subject one may consult \cite{mf:ccs}, \cite{bshhurtjankepka:scs},
\cite{monico:ofcss}, \cite{bashkepka:css}, \cite{zumbr:cofcsswz},
\cite{jezkepkamaroti:tesoas}, \cite{knt:mosssparp}, \cite{kz:fsais},
\cite{knz:ososacs}).

Motivated by \cite{a:lpatfd}, in this paper we initiate a study of
Leavitt path algebras (it deserves to be mentioned that, in some way,
a generalization of an idea of Leavitt algebras from \cite{leav:tmtoar}
to a semiring setting was earlier considered in \cite{hebwei:otrosos})
in a nonadditive/nonabelian semiring setting --- working with
semirings and semimodules, we live in a ``world without subtraction'' and,
therefore, have no privilege of the classical well developed
techniques of additive/abelian categories of modules over rings.  More
precisely, we consider the concepts of Leavitt path algebras with
coefficients in a commutative semiring~$S$, and of ideal- and
congruence-simpleness for those algebras; note that in our semiring
setting, in contrast to the ``additive'' ring case, these two
notions of simpleness are not the same (see, \textit{e.g.}, \cite[%
Examples~3.8]{knz:ososacs}) and should be differed. In light of this,
presenting some new, important and interesting in our view,
considerations, results and techniques regarding characterizations of
ideal- and congruence-simple Leavitt path algebras over a commutative
ground semiring~$S$, extending the ``classical'' ring characterizations 
(see, \cite[Theorem~3.11]{ap:tlpaoag05}, \cite[Theorem~3.1]{ap:tlpaoag08},
\cite[Theorem~6.18]{tomf:utaisflpa} and \cite[Theorem~3.11]{g:lpaadl}),
as well as motivating an interest to this direction of research, is a
main goal of our paper.

For the reader's convenience, all subsequently necessary basic
concepts and facts on semirings and Leavitt path algebras with
coefficients in a commutative semiring are collected in
Section~\ref{sec:basic}.

In Section~\ref{sec:ideal}, together with establishing some
important properties of the Leavitt path algebras with coefficients in
a commutative semiring~$S$, we provide a complete characterization of
ideal-simple Leavitt path algebras with coefficients in a semifield~$S$
(Theorem~3.4), constituting one of the central results of the paper
and extending the well-known characterizations (see,
\cite[Theorem~3.11]{ap:tlpaoag05}, \cite[Theorem~3.1]{ap:tlpaoag08},
\cite[Theorem~6.18]{tomf:utaisflpa} and \cite[Theorem~3.11]{g:lpaadl})
when the ground semiring $S$ is a field.

In Section~\ref{sec:congr}, together with establishing some
fundamental facts about the Leavitt path algebras with coefficients in
the Boolean semifield~$\mathbf{B}$ and combining them with
Theorem~3.4, we present a complete characterization of
congruence-simple Leavitt path algebras over row-finite graphs with
coefficients in a commutative semiring~$S$ (Theorem~4.5), constituting
another main result of the paper and extending the well-known
characterizations from [\textbf{op.\ cit.}]. It should be emphasized
that, in contrast to the ``classical'' case of the ground
structure~$S$ to be a commutative ring, in order to establish these
results in our semiring setting, one needs to exploit some innovative
approach and techniques of universal algebra based on dealing with
congruences rather then with ideals. Also, resolving
\cite[Problem~2]{knt:mosssparp} in the class of Leavitt path algebras
with coefficients in a commutative semiring~$S$, we show
(Corollary~4.2) that for algebras of this class the
congruence-simpleness implies their ideal-simpleness.

Finally, for notions and facts from semiring theory, we refer to
\cite{golan:sata}.


\section{Basic concepts}\label{sec:basic}

\subsection{Preliminaries on semirings}

Recall \cite{golan:sata} that a \emph{hemiring\/} is an algebra $(S,+,\cdot
,0)$ such that the following conditions are satisfied:

(1) $(S,+,0)$ is a commutative monoid with identity element $0$;

(2) $(S,\cdot)$ is a semigroup;

(3) Multiplication distributes over addition from either side;

(4) $0s=0=s0$ for all $s\in S$.

A hemiring~$S$ is \emph{commutative} if $(S, \cdot)$ is a commutative
semigroup; and a hemiring $S$ is \emph{additively idempotent} if
$a+a=a$ for all $a\in S$.  Moreover, a hemiring $S$ is a \emph{semiring} 
if its multiplicative semigroup $(S, \cdot)$ actually is a monoid
$(S, \cdot, 1)$ with identity element~$1$.  A commutative semiring~$S$
is a \emph{semifield} if $(S \setminus \{0\}, \cdot, 1)$ is a group.  Two
well-known examples of semifields are the additively idempotent two
element semiring $\mathbf{B} = \{0,1\}$, the so-called \emph{Boolean
  semifield}, and the tropical semifield $(\mathbb{R}\cup \{-\infty
\},\vee ,+,-\infty ,0\})$.

As usual, given two hemirings $S$ and $S'$, a map $\varphi: S 
\longrightarrow S'$ is a \emph{homomorphism} iff $\varphi(x+y) = 
\varphi(x) + \varphi(y)$ for all $x,y\in S$; and a submonoid~$I$ of 
$(S,+,0)$ is an \emph{ideal} of a hemiring~$S$ iff $sa$ and $as\in I$ for
all $a \in I$ and $s \in S$; an equivalence relation $\rho$ on a hemiring~$S$
is a \emph{congruence} iff $(s+a,s+b)\in \rho v$, $(sa,sb)\in \rho $ and 
$(as, bs) \in \rho$ for all pairs $(a,b) \in \rho $ and $s \in S$.  On every
hemiring $S$ there are always the two trivial congruences --- the \emph{%
diagonal congruence}, $\vartriangle_{S} := \{(s,s) \mid s\in S\}$, and the
\emph{universal congruence}, $S^{2} := \{(a,b) \mid a,b\in S\}$.  Following 
\cite{bshhurtjankepka:scs}, a hemiring~$S$ is \textit{congruence-simple} if 
$\vartriangle_{S}$ and $S^{2}$ are the only congruences on~$S$;
and~$S$ is \textit{ideal-simple} if $0$ and~$S$ are the only ideals
of~$S$.  It is clear that a hemiring~$S$ is congruence-simple iff
every nonzero hemiring homomorphism $\varphi: S\longrightarrow S'$ is
injective.  Obviously, the concepts congruence- and ideal-simpleness
are the same for rings and, therefore, we have just simple rings, but
they are different ones for semirings in general (see, \textit{e.g.},
\cite[Examples 3.8]%
{knz:ososacs}).

An $S$-\emph{semimodule} over a given commutative semiring~$S$ is a
commutative monoid $(M,+,0_{M})$ together with a scalar multiplication 
$(s,m) \mapsto sm$ from $S\times M$ to~$M$ which satisfies the identities 
$(ss')m = s(s'm)$, $s(m+m') = sm+sm'$, $(s+s')m = sm+s'm$, $1m=m$, 
$s0_{M} = 0_{M} = 0m$ for all $s,s'\in S$ and $m,m'\in M$. \emph{Homomorphisms}
between semimodules and \emph{free} semimodules are defined in the standard
manner.

By an $S$-algebra $A$ over a given commutative semiring $S$ we mean an 
$S$-semimodule $A$ with an associative bilinear $S$-semimodule multiplication
``\,$\cdot$\,'' on $A$.  An $S$-algebra~$A$ is \emph{unital} if $(A,\cdot)$
is actually a monoid with a neutral element $1_{A}\in A$, \textit{i.e.}, 
$a1_{A}=a=1_{A}a$ for all $a\in A$.  For example, every hemiring is an 
$\mathbb{N}$-algebra, where $\mathbb{N}$ is the semiring of the natural 
numbers with added $0$; and, of course, every additively idempotent hemiring 
is a $\mathbf{B}$-algebra.

Let $S$ be a commutative semiring and $\{x_{i} \mid i\in I\}$ a set of
independent, noncommuting indeterminates. Then $S\langle x_{i} \mid i\in
I\rangle $ will denote the free $S$-algebra generated by the indeterminates 
$\{x_{i} \mid i\in I\}$, whose elements are polynomials in the noncommuting
variables $\{x_{i} \mid i\in I\}$ with coefficients from $S$ that commute
with each variable $x_{i},i\in I$.

Finally, let~$S$ be a commutative semiring and $(G,\cdot ,1)$ a group. Then
we can form the \emph{group semiring} $S[G]$, whose elements are formal sums
$\sum_{g\in G}a_{g}g$ with the \emph{coefficients} $a_{g}\in $ $S$ and the
finite \emph{support}, \textit{i.e.}, almost all $a_{g}=0$. As usual, the
operations of addition and multiplication on $S[G]$ are defined as follows
\begin{gather*}
  \smsum_{g\in G}a_{g}g + \smsum_{g\in G}b_{g}g = \smsum_{g\in G}
  (a_{g}+b_{g})g , \\
  (\smsum_{g\in G}a_{g}g) (\smsum_{h\in G}b_{h}h) = \smsum_{t\in G}c_{t}t ,
\end{gather*}%
where $c_{t}=\sum a_{g}b_{h}$, with summation over all $(g,h)\in G\times G$
such that $gh=t$. Clearly, the elements of $S:=$ $S\cdot 1$ commute with the
elements of $G:=1\cdot G$ under the multiplication in $S[G]$. In particular,
one may easily see that $S[\mathbb{Z}]\cong S[x,x^{-1}]$, where $S[x,x^{-1}]$
is the algebra of the \textit{Laurent polynomials} over~$S$.

\subsection{Basics on Leavitt path algebras with coefficients in 
  a commutative semiring}

In this subsection, we introduce Leavitt path algebras having coefficients
in an arbitrary commutative semiring~$S$.  The construction of such algebras
is, certainly, a straightforward generalization of the constructions of the
Leavitt path algebras with the semiring~$S$ to be a field and a commutative
ring with unit originated in~\cite{ap:tlpaoag05} and~\cite{tomf:lpawciacr},
respectively.  All these constructions are crucially based on some general
notions of graph theory that for the reader's convenience we reproduce here.

A (directed) graph $\Gamma =(V,E,s,r)$ consists of two disjoint sets~$V$ 
and~$E$ -- \textit{vertices} and \textit{edges}, respectively -- and two 
maps $s,r: E\longrightarrow V$.  If $e\in E$, then $s(e)$ and $r(e)$ are 
called the \textit{source} and \textit{range} of $e$, respectively.  The 
graph $\Gamma$ is \emph{row-finite} if $|s^{-1}(v)| < \infty$ for every 
$v \in V$.  A vertex~$v$ for which $s^{-1}(v)$ is empty is called a
\emph{sink}; and a vertex~$v$ is \emph{regular} iff $0 < |s^{-1}(v)| < \infty$.
A \emph{path} $p = e_{1} \dots e_{n}$ in a graph $\Gamma$ is a sequence of 
edges $e_{1}, \dots, e_{n}$ such that $r(e_{i}) = s(e_{i+1})$ for
$i = 1, \dots, n-1$.  In this case, we say that the path~$p$ starts at
the vertex $s(p) := s(e_{1})$ and ends at the vertex $r(p) := r(e_{n})$,
and has \emph{length} $|p| := n$.  We consider the vertices in~$V$ to be
paths of length~$0$.  If $s(p) = r(p)$, then~$p$ is a \emph{closed path
  based at} $v = s(p) = r(p)$.  Denote by $\text{CP}(v)$ the set of all
such paths.  A closed path based at~$v$, $p = e_{1} \dots e_{n}$, is a
\emph{closed simple path based at}~$v$ if $s(e_i) \neq v$ for every $i > 1$.
Denote by $\text{CSP}(v)$ the set of all such paths.  If $p = e_{1}
\dots e_{n}$ is a closed path and all vertices $s(e_{1}), \dots, s(e_{n})$
are distinct, then the subgraph $(s(e_{1}), \dots, s(e_{n}); e_{1},
\dots, e_{n})$ of the graph $\Gamma$ is called a \emph{cycle}.  An
edge~$f$ is an \emph{exit} for a path $p = e_{1} \dots e_{n}$ if
$s(f) = s(e_{i})$ but $f \ne e_{i}$ for some $1 \le i \le n$.

\begin{defn}[{\textit{cf.}~\cite[Definition~1.3]{ap:tlpaoag05} and 
    \cite[Definition~2.4]{tomf:lpawciacr}}] 
  Let $\Gamma = (V,E,s,r)$ be a graph and $S$ a commutative semiring.
  The \emph{Leavitt path algebra} $L_{S}(\Gamma)$ of the graph~$\Gamma$
  \emph{with coefficients in}~$S$ is the $S$-algebra presented by the
  set of generators $V\cup E\cup E^{\ast}$ -- where $E\rightarrow E^{\ast}$, 
  $e\mapsto e^{\ast}$, is a bijection with $V$, $E$, $E^{\ast}$ pairwise
  disjoint -- satisfying the following relations:
  
  (1) $v v' = \delta_{v,v'} v$ for all $v, v' \in V$;
  
  (2) $s(e) e = e = e r(e)$, $r(e) e^{\ast} = e^{\ast} = e^{\ast} s(e)$ for
  all $e\in E$;
  
  (3) $e^{\ast} f = \delta_{e,f} r(e)$ for all $e,f \in E$;
  
  (4) $v = \sum_{e\in s^{-1}(v)} e e^{\ast}$ whenever $v\in V$ is a regular
  vertex.
\end{defn}

It is easy to see that the mappings given by $v \mapsto v$, for $v \in V$,
and $e \longmapsto e^{\ast}$, $e^{\ast} \longmapsto e$ for $e\in E$, produce 
an involution on the algebra $L_{S}(\Gamma)$, and for any path $p = e_{1} 
\dots e_{n} $ there exists $p^{\ast} := e_{n}^{\ast} \dots e_{1}^{\ast}$.

Observe that the Leavitt path algebra $L_{S}(\Gamma)$ can also be
defined as the quotient of the free $S$-algebra $S\langle v,e,e^{\ast}
\mid v\in V, e\in E, {e^{\ast}\in E^{\ast}} \rangle$ by the congruence
$\sim$ generated by the following ordered pairs:

(1) $(v v', \delta_{v,v'} v)$ for all $v, v' \in V$,

(2) $(s(e) e, e), (e, e r(e))$ and $(r(e) e^{\ast}, e^{\ast}), (e^{\ast},
e^{\ast} s(e))$ for all $e \in E$,

(3) $(e^* f, \delta_{e,f} r(e))$ for all $e, f \in E$,

(4) $(v, \sum_{e\in s^{-1}(v)} e e^{\ast})$ for all regular vertices $v\in V$.

\begin{rem}
  As will be shown in Proposition~2.4, for any graph $\Gamma =
  (V,E,s,r)$, all generators $\{ v, e, e^{\ast} \mid v\in V, e\in E,
  e^{\ast}\in E^{\ast}\}$ of $L_{S}(\Gamma)$ are nonzero.
  Furthermore, from the observation above, it readily follows that
  $L_{S}(\Gamma)$ is, in fact, the ``largest'' algebra generated by
  the elements $\{v, e, e^{\ast} \mid v\in V, e\in E, e^{\ast}\in E^{\ast}\}$
  satisfying the relations (1) -- (4) of Definition~2.1, in other words, 
  $L_{S}(\Gamma)$ has the following \textit{universal} property: 
  If~$A$ is an $S$-algebra generated by a family of elements
  $\{a_{v}, b_{e}, c_{e^{\ast}} \mid c\in V, e\in E, {e^{\ast } \in E^{\ast}}\}$ 
  satisfying the analogous to (1) -- (4) relations in Definition~2.1,
  then there always exists an $S$-algebra homomorphism 
  $\varphi: L_{S}(\Gamma) \rightarrow A$ given by ${\varphi(v) = a_{v}}$,
  ${\varphi(e) = b_{e}}$ and ${\varphi(e^{\ast}) = c_{e^{\ast}}}$.
\end{rem}

The following examples illustrate that some well-known (classical) algebras
actually can be viewed as the Leavitt path algebras as well.

\begin{exas}[{\textit{cf}.~\cite[Examples~1.4]{ap:tlpaoag05}}]
  Let $S$ be a commutative semiring.

  (i) Let $\Gamma = (V,E,s,r)$ be a graph with $V = \{v_{1}, \dots,
  v_{n}\}$ and $E = \{e_{1}, \dots, e_{n-1}\}$, where $s(e_{i}) =
  v_{i}$, $r(e_{i}) = v_{i+1}$ for all $i = 1, \dots, n-1$.  Then it is
  easy to check that the map $\varphi: L_{S}(\Gamma) \longrightarrow
  M_{n}(S)$, given by $\varphi(v_{i}) = E_{i,i}$, $\varphi(e_{i}) = E_{i,i+1}$
  and $\varphi(e_{i}^{\ast}) = E_{i+1,i}$, where $\{E_{i,j} \mid 1 \leq i,j
  \leq n\}$ are the standard elementary matrices in the $n \times n$ matrix
  semiring $M_{n}(S)$, is an $S$-algebra isomorphism.

  (ii) Let $\Gamma = (V,E,s,r)$ be a graph given by $V = \{ v \}$ and
  $E = \{ e \}$.  Then it is obvious that the Leavitt path algebra
  $L_{S}(\Gamma)$ is isomorphic to the Laurent polynomial algebra
  $S[x,x^{-1}]$ with $x:=e$ and $x^{-1} := e^{\ast}$.

  (iii) In \cite{leav:tmtohi}, investigating rings with the Invariant
  Basis Number property there were introduced what we now call the
  Leavitt algebras of the form $L_{K}(1,n)$, where $K$ is a field and
  $n \geq 2$ is a natural number, of type $(1,n)$.  Then, in
  \cite{hebwei:otrosos}, the authors, generalizing the Leavitt algebra
  construction in a semiring setting, constructed an $S$-algebra 
  $L_{S}(1,n)$ over a commutative semiring~$S$ which was defined by
  the generators $\{x_{i}, y_{i} \mid 1 \leq i \leq n\}$ and relations
  $x_{i} y_{j} = \delta_{ij}$ for all $1 \leq i,j \leq n$, and 
  $\sum_{i=1}^{n} y_{i} x_{i} = 1$.  Considering the graph $\Gamma =
  (V,E,s,r)$ given by $V = \{v\}$ and $E = \{e_{1}, \dots, e_{n}\}$, one
  may easily verify that the Leavitt path algebra $L_{S}(\Gamma)$ is,
  in fact, isomorphic to $L_{S}(1,n)$ by letting $y_{i} := e_{i}$ and 
  $x_{i} := e_{i}^{\ast}$ for all $1 \leq i \leq n$.
\end{exas}

The following proposition is an analog of \cite[Proposition~3.4]%
{tomf:lpawciacr} for a non-abelian semiring setting and presents some
fundamental properties of the Leavitt path algebras.

\begin{prop}[{\textit{cf}.~\cite[Proposition~3.4]{tomf:lpawciacr}}]
 Let $\Gamma = (V,E,s,r)$ be a graph and~$S$ a commutative semiring.  Then,
 the Leavitt path algebra $L_{S}(\Gamma)$ has the following properties:

   (1) All elements of the set $\{ v, e, e^{\ast} \mid v\in V,e\in E,
   e^{\ast} \in E^{\ast}\}$ are nonzero;
   
   (2) If $a, b$ are distinct elements in~$S$, then $av\neq bv$ for all 
   $v\in V$;

   (3) Every monomial in $L_{S}(\Gamma)$ is of the form $\lambda p q^{\ast}$,
   where $\lambda \in S$ and $p, q$ are paths in $\Gamma$ such that
   $r(p) = r(q)$.
\end{prop}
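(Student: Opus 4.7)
The plan is to prove (3) first, by a purely algebraic rewriting argument using only relations (1)--(3) of Definition~2.1, and then deduce (1) and (2) together via the universal property recorded in Remark~2.3. This ordering is natural because (3) requires no representation-theoretic input, whereas (1) and (2) genuinely need an external witness to rule out the trivial collapse of the construction.

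For (3), I would induct on the number $n$ of factors in a monomial $w = z_{1} \cdots z_{n}$ with each $z_{i} \in V \cup E \cup E^{\ast}$. The induction step examines each adjacent pair $z_{i} z_{i+1}$ and rewrites it via relations (1)--(3): two vertices collapse to $\delta_{v,v'} v$; a vertex next to an edge or a starred edge is either absorbed by (2) or annihilated; the forward--backward juxtaposition $e f^{\ast}$ is already admissible; and $e^{\ast} f$ is either $0$ or the vertex $r(e)$. Iterating these reductions eliminates every internal $e^{\ast} f$, so the surviving monomial is a scalar times a sequence of forward edges times a sequence of starred edges, i.e.\ $\lambda p q^{\ast}$. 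The requirement $r(p) = r(q)$ reflects the fact that otherwise the vertex $r(p)$ standing between $p$ and $q^{\ast}$ annihilates $q^{\ast}$ on the left via relation (1), forcing the monomial to be zero (in which case one takes $\lambda = 0$).

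For (1) and (2), I would produce an $S$-algebra $A$ equipped with elements $\{a_{v}, b_{e}, c_{e^{\ast}}\}$ satisfying the analogs of relations (1)--(4) of Definition~2.1, such that each of these elements is nonzero in $A$ and $a \cdot a_{v} \neq b \cdot a_{v}$ in $A$ whenever $a \neq b$ in $S$. Remark~2.3 then supplies an $S$-algebra homomorphism $\varphi: L_{S}(\Gamma) \to A$ sending the generators to these witnesses, and nonvanishing and distinctness in $A$ immediately pull back to $L_{S}(\Gamma)$. The principal obstacle is constructing $A$ over an arbitrary commutative semiring, where cancellation is unavailable and relation (4) must be enforced honestly. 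My candidate is the $S$-algebra of $S$-semimodule endomorphisms of a free $S$-semimodule $M$ whose basis is an appropriately augmented set of paths in $\Gamma$: $v$ acts as the projection onto paths with source $v$, $e$ acts by left multiplication, and $e^{\ast}$ acts by stripping off an initial edge $e$ (and by zero on the remaining basis vectors). Relations (1)--(3) then hold on every basis element by direct inspection, but the Cuntz--Krieger relation (4) is delicate at length-zero basis vectors at regular vertices, where the naive action of $\sum_{e \in s^{-1}(v)} e e^{\ast}$ on $v$ gives zero rather than $v$; to repair this one enlarges the basis, for instance by adjoining an infinite tail at each regular vertex in the spirit of the standard desingularization in the row-finite literature. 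Once $M$ is set up in this way, distinguished basis elements simultaneously detect the nonvanishing of each generator and the distinctness of distinct scalar multiples $a \cdot v$, yielding both (1) and (2).
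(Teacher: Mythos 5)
Your part (3) is fine, and your overall strategy for (1) and (2) --- build a concrete $S$-algebra $A$ of semimodule endomorphisms with nonzero witnesses satisfying the relations of Definition~2.1 and pull everything back along the universal homomorphism --- is exactly the paper's. The gap is that you never actually construct $A$: the one point you correctly flag as delicate, relation~(4) at regular vertices, is the entire difficulty, and the repair you offer does not work as stated. \emph{Enlarging} the basis cannot help, because the offending basis vector, the length-zero path $v$ at a regular vertex, remains in the basis and is still annihilated by $\sum_{e\in s^{-1}(v)} T_e T_{e^{\ast}}$ no matter what else you adjoin; for relation~(4) to hold at $v$, every vector in the fiber over $v$ must lie in $\sum_{e\in s^{-1}(v)}\mathrm{im}(T_e)$, so the vector $v$ has to be removed or reinterpreted, not supplemented. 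Moreover, desingularization attaches tails at sinks and infinite emitters, not at regular vertices; grafting a tail edge $g$ onto a regular vertex $v$ changes the relation at $v$ to $v=\sum_{e\in s^{-1}(v)}ee^{\ast}+gg^{\ast}$, so you would be representing a different algebra.

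Two honest repairs exist. Within your path-space picture, take as basis only the \emph{boundary} paths: finite paths ending at a sink or infinite emitter, together with infinite paths. Then every basis path issuing from a regular vertex has positive length, so relation~(4) holds on each basis vector; every vertex still supports at least one basis path (from a regular vertex keep extending until you hit a singular vertex or produce an infinite path), so each $T_v$ is nonzero; and part~(2) follows by comparing coefficients in the free semimodule. The paper instead sidesteps paths altogether: it fixes one free semimodule $Z=S^{(I)}$, sets $A_e:=Z$ for every edge, defines $A_v:=\bigoplus_{s(e)=v}A_e$ for non-sinks and $A_v:=Z$ for sinks, and chooses isomorphisms $T_e\colon A_{r(e)}\to A_e\subseteq A_{s(e)}$ with $T_{e^{\ast}}$ their inverses extended by zero, so that relation~(4) holds by the very definition of $A_v$; part~(2) is then read off from $A_v\cong S\oplus M$. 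Either route completes your argument, but as written the construction of the witness algebra is missing at precisely the step that carries the content of (1) and (2).
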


\begin{proof} 
  The proof given for the case of rings in \cite[Proposition~3.4]%
  {tomf:lpawciacr}, which, in turn, uses a similar construction as for
  the case of fields from \cite[Lemma~1.5]{g:lpaadl}, is based on
  Remark~2.2 --- there should be constructed an $S$-algebra $A$ as in
  Remark~2.2 having all generators $\{a_{v}, b_{e}, c_{e^{\ast}} \mid
  v\in V, e\in E, e^{\ast}\in E^{\ast}\}$ to be nonzero.  It almost
  does not depend on the ``abelianness'' of the ring case and,
  therefore, it works in our semiring setting as well.  Just for the
  reader's convenience, we have decided to sketch it here.

  Thus, let~$I$ be an infinite set of the cardinality at least $|V \cup E|$,
  and let $Z:=S^{(I)}$ a free $S$-semimodule with the basis $I$, \textit{i.e.},
  $Z$ is a direct sum of~$|I|$ copies of~$S$. For each $e\in E$, let 
  $A_{e}:=Z$ and, for each $v\in V$, let
  \[ A_{v} := \begin{cases}
    \bigoplus_{s(e)=v} A_{e} & \text{if } |s^{-1}(v)| \ne \varnothing , \\
    Z & \text{if } v \text{ is a sink.}%
  \end{cases} \]
  
  Note that all~$A_{e}$ and~$A_{v}$ are all mutually isomorphic, since each 
  of them is the direct sum of~$|I|$ many copies of~$S$.  Let $A :=
  \bigoplus_{v\in V} A_{v}$.  For each $v \in V$ define $T_{v}: A_{v} 
  \longrightarrow A_{v}$ to be the identity map and extend it to a 
  homomorphism $T_{v}: A \longrightarrow A$ by defining~$T_{v}$ to be zero 
  on $A \ominus A_{v}$.  Also, for each $e \in V$ choose an isomorphism 
  $T_{e}: A_{r(e)} \longrightarrow A_{e} \subseteq A_{s(e)}$ and extend it to a 
  homomorphism $T_{e}: A \longrightarrow A$ by mapping to zero on 
  $A \ominus A_{r(e)}$.  Finally, we define $T_{e^{\ast}}: A \longrightarrow A$ 
  by taking the isomorphism $T_{e}^{-1}: A_{e} \subseteq A_{s(e)} \longrightarrow
  A_{r(e)}$ and extending it to a homomorphism $T_{e^{\ast}}: A \longrightarrow A$
  by letting $T_{e^{\ast}}$ to be zero on $A \ominus A_{e}$.

  Now consider the subalgebra of $\text{Hom}_{S}(A,A)$ generated by $\{T_{v},
  T_{e}, T_{e^{\ast}} \mid v\in V, e\in E, e^{\ast}\in E^{\ast}\}$.  It is 
  straightforward to check (\textit{cf.}~\cite[Lemma~1.5]{g:lpaadl}) that 
  $\{T_{v},T_{e},T_{e^{\ast}} \mid {v\in V}, {e\in E}, {e^{\ast}\in E^{\ast}}\}$ 
  is a collection of nonzero elements satisfying the relations described
  in Definition~2.1.  By the universal property of $L_{S}(\Gamma)$, we
  get that the elements of the set $\{v,e,e^{\ast} \mid {v\in V}, {e\in E},
  {e^{\ast}\in E^{\ast}}\}$ are nonzero and (1) is established.
  
  Next we note that for each $v\in V$ we have $A_{v}=S\oplus M$ for some $S$%
  -semimodule $M$. Let $a,b$ be two distinct elements in $S$. We have
  \[ aT_{v}(1,0) = T_{v}(a,0) = (a,0) \ne (b,0) = T_{v}(b,0) = bT_{v}(1,0), \]
  so $a T_{v} \ne b T_{v}$.  The universal property of $L_{S}(\Gamma)$ then
  implies that $av \ne bv$, and (2) is established.
  
  As to (3), it follows immediately from the fact that $e^{\ast}f=\delta
  _{e,f}r(e)$ for all $e,f\in E$.
\end{proof}

As usual, for a hemiring~$S$ a \emph{set of local units}~$F$ is a set 
$F \subseteq S$ of idempotents in~$S$ such that, for every finite subset 
$\{s_{1}, \dots, s_{n}\} \subseteq S$, there exists an element $f \in F$ with
$fs_{i} = s_{i} = s_{i}f$ for all $1 \le i \le n$.  Using Proposition~2.4 and
repeating verbatim the proof of \cite[Lemma~1.6]{ap:tlpaoag05}, one obtains
the following useful fact.

\begin{prop}
  Let $\Gamma = (V,E,s,r)$ be a graph and $S$ a commutative semiring.
  Then $L_{S}(\Gamma)$ is a unital $S$-algebra if~$V$ is finite; and
  if~$V$ is infinite, the set of all finite sums of distinct elements
  of~$V$ is the set of local units of the $S$-algebra $L_{S}(\Gamma)$.
\end{prop}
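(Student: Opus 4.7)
The plan is to invoke Proposition~2.4(3) to reduce every analysis to monomials of the form $\lambda p q^\ast$ with $r(p) = r(q)$, and then to exhibit explicit (local) units assembled from finite sums of distinct vertices of~$V$.

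First I would establish the vertex-absorption observation that drives everything. For any monomial $\lambda p q^\ast$ in $L_S(\Gamma)$, relations~(1) and~(2) of Definition~2.1 immediately yield
\[ s(p)\cdot(\lambda p q^\ast) = \lambda p q^\ast = (\lambda p q^\ast)\cdot s(q), \]
while relation~(1) also gives $v\cdot(\lambda p q^\ast) = 0$ for any vertex $v\ne s(p)$, and analogously on the right side for $v\ne s(q)$. Consequently, for any finite subset $V_0\subseteq V$ containing both $s(p)$ and $s(q)$, the element $f := \sum_{v\in V_0} v$ acts as a two-sided identity on $\lambda p q^\ast$. Moreover, the orthogonality relation $vv' = \delta_{v,v'} v$ makes every such finite sum~$f$ idempotent in $L_S(\Gamma)$.

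The two cases of the proposition then follow at once. If~$V$ is finite, set $1 := \sum_{v\in V} v$; by the previous paragraph together with Proposition~2.4(3) and the bilinearity of multiplication, $1$ is a two-sided multiplicative identity on every monomial and hence on all of $L_S(\Gamma)$, so $L_S(\Gamma)$ is unital. If~$V$ is infinite, let~$F$ denote the set of all finite sums of distinct vertices. Given a finite subset $\{a_1,\dots,a_n\}\subseteq L_S(\Gamma)$, I would expand each $a_i$ as a finite $S$-linear combination of monomials $\lambda_{ij} p_{ij} q_{ij}^\ast$, collect the (finitely many) vertices $s(p_{ij})$ and $s(q_{ij})$ into a finite set $V_0\subseteq V$, and put $f := \sum_{v\in V_0} v \in F$. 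The absorption step then gives $fa_i = a_i = a_i f$ for every~$i$, verifying the local units property.

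The whole argument runs on vertex orthogonality and source/range absorption, which use no cancellation or additive inverses, so the transfer from the ring proof of \cite[Lemma~1.6]{ap:tlpaoag05} to the semiring setting is essentially cosmetic. The only nontrivial prerequisite is that the candidate units~$f$, and in particular the generating vertices~$v$ themselves, are genuinely nonzero elements of $L_S(\Gamma)$ that act faithfully on monomials; this is precisely what Proposition~2.4(1)--(2) secures, and recognizing that this is the one place where the semiring setting could have caused trouble is the main (though modest) obstacle before the short computation above goes through.
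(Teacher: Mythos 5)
Your argument is correct and is exactly the standard one the paper invokes by citing \cite[Lemma~1.6]{ap:tlpaoag05} verbatim: decompose elements into monomials $\lambda p q^{\ast}$ via Proposition~2.4(3), and use vertex orthogonality and source/range absorption to show that the finite sum of the relevant sources is an idempotent acting as a (local) identity. No divergence from the paper's route, and the observation that no subtraction is used is precisely why the paper says the proof carries over.
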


Let $\Gamma = (V,E,s,r)$ be a graph.  A subset $H \subseteq V$ is called 
\emph{hereditary} if $s(e) \in H$ implies $r(e) \in H$ for all $e\in E$; 
and $H \subseteq V$ is \emph{saturated} if $v \in H$ for any regular 
vertex $\nu$ with $r(s^{-1}(v)) \subseteq H$.  Obviously, the two trivial 
subsets of $V$, $\varnothing$ and $V$, are hereditary and saturated ones. 
We note the following useful observation whose proof is completely
analogous to the ones in \cite[Lemma~3.9]{ap:tlpaoag05} and
\cite[Lemma~2.3]{ap:tlpaoag08} and which, for the reader's
convenience, we provide here.

\begin{lem}
  Let $\Gamma = (V,E,s,r)$ be a graph, $S$ a commutative semiring, and 
  $I$ an ideal of $L_{S}(\Gamma)$.  Then, $I \cap V$ is a hereditary and
  saturated subset of~$V$.
\end{lem}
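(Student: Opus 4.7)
The plan is to verify the two defining properties of a hereditary and saturated set directly, using the defining relations of $L_S(\Gamma)$ from Definition~2.1 together with the basic fact that two-sided ideals are closed under left and right multiplication by arbitrary elements of the algebra.

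For the hereditary condition, I would take $v \in I \cap V$ and an edge $e \in E$ with $s(e) = v$, and exhibit $r(e)$ as an element of $I$. The natural candidate to consider is $e^{\ast} v e$, which lies in $I$ because $I$ is a two-sided ideal containing $v$. Using relation~(2) of Definition~2.1, $e^{\ast} s(e) = e^{\ast}$ and $s(e) e = e$, so $e^{\ast} v e = e^{\ast} e$; and then relation~(3) gives $e^{\ast} e = r(e)$. Hence $r(e) \in I \cap V$, which proves hereditariness.

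For saturatedness, I would take a regular vertex $v$ satisfying $r(s^{-1}(v)) \subseteq I \cap V$ and try to express $v$ itself as an element of $I$. Relation~(4) gives the identity $v = \sum_{e \in s^{-1}(v)} e e^{\ast}$, where the sum is finite by regularity. For each $e \in s^{-1}(v)$, the vertex $r(e)$ lies in $I$ by hypothesis, so using $e r(e) = e$ from relation~(2) we can rewrite $e e^{\ast} = e r(e) e^{\ast}$, which belongs to $I$ because $I$ absorbs multiplication on both sides. Summing over the finitely many $e \in s^{-1}(v)$ puts $v$ in $I$, and since $v \in V$ we obtain $v \in I \cap V$.

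No step here looks like a genuine obstacle; the argument is a direct translation of the standard ring-theoretic proof to the semiring setting, and crucially it uses only algebraic manipulations valid in any $S$-algebra (no subtraction is ever required). The only point worth being careful about is that, in the saturated case, one needs $v$ to be regular so that the sum $\sum_{e\in s^{-1}(v)} ee^{\ast}$ is a well-defined finite element of $L_S(\Gamma)$; for non-regular vertices relation~(4) is not available and the argument would fail, but this is exactly why the definition of saturatedness restricts to regular vertices.
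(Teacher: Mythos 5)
Your proof is correct and follows essentially the same route as the paper's: both arguments sandwich the known vertex between an edge and its ghost edge (using relations (2) and (3) to get $r(e)=e^{\ast}s(e)e\in I$ for hereditariness, and $ee^{\ast}=e\,r(e)\,e^{\ast}\in I$ summed over the finite set $s^{-1}(v)$ via relation (4) for saturatedness). Your remark that no subtraction is needed is exactly why the ring-theoretic proof transfers verbatim to the semiring setting, as the paper also notes.
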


\begin{proof}
  For any $e \in E$ with $s(e) \in H$ we have $r(e) \in H$, since 
  $e = s(e)e \in I$, and thus $r(e) = e^{\ast} e \in I$.  Furthermore, if a
  regular vertex $v \in V$ satisfies $r(e) \in H$ for all $e\in E$ with 
  $s(e) = v$, then $v \in H$, since $e = e r(e) \in I$ for all these edges 
  $e \in E$, and hence $v = \sum_{e\in s^{-1}(v)} e e^{\ast} \in I$.
\end{proof}

We conclude this section with the following, although simple but quite
useful, technical remark obviously following from the identity
$e^{\ast} f = \delta_{e,f} r(e)$ for all $e, f \in E$.

\begin{rem}
  For any two paths $p, q$ in $\Gamma$ we have
  \[ p^{\ast} q = \begin{cases}
    q' & \text{if } q = p q' , \\
    r(p) & \text{if } p = q , \\
    p'^{\ast} & \text{if } p = q p' , \\
    0 & \text{otherwise} .
  \end{cases} \]
\end{rem}


\section{Ideal-simpleness of Leavitt path algebras with
  coefficients in a semifield}\label{sec:ideal}

The main goal of this section is to present a description of the
ideal-simple Leavitt path algebras $L_{S}(\Gamma)$ of arbitrary graphs
$\Gamma = (V,E,s,r)$ with coefficients in a semifield $S$ that extends
the well-known description when the ground semifield~$S$ is a
field~$K$ (\cite[Theorem~3.11]{ap:tlpaoag05}, \cite[Theorem~3.1]%
{ap:tlpaoag08}, \cite[Theorem~6.18]{tomf:utaisflpa} and \cite%
[Theorem~3.11]{g:lpaadl}).  For that we have to establish some
subsequently needed important facts.

\begin{prop}
  A graph $\Gamma = (V,E,s,r)$ of an ideal-simple Leavitt path
  algebra $L_{S}(\Gamma)$ with coefficients in a commutative
  semiring~$S$ satisfies the following two conditions:
  
  (1) The only hereditary and saturated subset of~$V$ are~$\varnothing$
  and~$V$;
  
  (2) Every cycle in~$\Gamma$ has an exit.
\end{prop}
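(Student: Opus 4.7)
The strategy is: if either condition fails, exhibit a nontrivial proper ideal of $L_{S}(\Gamma)$, contradicting ideal-simpleness. For (1), given a hereditary saturated $H$ with $\varnothing \subsetneq H \subsetneq V$, I would form the ``quotient graph'' $\Gamma/H$ with vertex set $V \setminus H$ and edge set $\{e \in E \mid r(e) \notin H\}$ (hereditariness forces $s(e) \notin H$ automatically for such $e$). Using Remark~2.2, I would construct a surjective $S$-algebra homomorphism $\pi: L_{S}(\Gamma) \to L_{S}(\Gamma/H)$ sending the surviving generators to themselves and all other generators to~$0$. The only relation needing care is relation~(4) at a regular $v \notin H$: saturation of $H$ ensures that at least one outgoing edge of $v$ has range outside $H$, so $v$ remains regular in $\Gamma/H$ and the surviving summation still equals $v$. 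Then $\ker \pi$ is nonzero (containing any $w \in H$, nonzero by Proposition~2.4(1)) and proper (for $v \in V \setminus H$, Proposition~2.4(1) applied inside $L_{S}(\Gamma/H)$ gives $\pi(v) = v \ne 0$), yielding the required contradiction.

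For (2), suppose $c = e_{1} \cdots e_{n}$ is a cycle without exit based at $v = v_{0}$, with intermediate vertices $v_{0}, \dots, v_{n-1}$. The no-exit assumption forces each $v_{i}$ to have $e_{i+1}$ as its unique outgoing edge, so relation~(4) collapses to $v_{i} = e_{i+1} e_{i+1}^{\ast}$, and a telescoping computation yields $c c^{\ast} = v = c^{\ast} c$. The central step is to identify the corner $v L_{S}(\Gamma) v$ with $S[x,x^{-1}]$ via $c \leftrightarrow x$, $c^{\ast} \leftrightarrow x^{-1}$. Surjectivity is immediate: by Proposition~2.4(3) any element of $v L_{S}(\Gamma) v$ is a sum of monomials $\lambda p q^{\ast}$ with $s(p) = s(q) = v$; no-exit confines $p$ and $q$ to the cycle, and the telescoping above reduces each $p q^{\ast}$ to a power of $c$ or of $c^{\ast}$. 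For injectivity I would exploit the $\mathbb{Z}$-grading of $L_{S}(\Gamma)$ given by $|v| = 0$, $|e| = 1$, $|e^{\ast}| = -1$, well defined because all the defining relations of Definition~2.1 are homogeneous; the powers $c^{k}$ then sit in distinct graded components.

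Granting this identification, set $I := \langle v + c \rangle$. Since $v$ and $c$ lie in distinct graded components, $v + c \ne 0$, so $I \ne 0$. If $I = L_{S}(\Gamma)$, then starting from $v = \sum_{i} a_{i}(v+c) b_{i}$ and using the identity $v(v+c) v = v + c$ (from $vc = c = cv$), compressing on both sides by $v$ yields $v = \sum_{i} (v a_{i} v)(v+c)(v b_{i} v)$ entirely inside the commutative corner $v L_{S}(\Gamma) v$. Commutativity collapses this to $v = (v+c) h$ for some $h \in v L_{S}(\Gamma) v$, which under the corner isomorphism transfers to $1 = (1+x) g(x)$ in $S[x,x^{-1}]$. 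A short degree argument on the extremal coefficients of $(1+x) g$ within the support of $g$ --- using only $a + 0 = a$ --- shows that $1 + x$ is never a unit in $S[x,x^{-1}]$ over any commutative semiring~$S$, yielding the contradiction. The chief obstacle is the injectivity half of the corner isomorphism, which relies on the $\mathbb{Z}$-grading of $L_{S}(\Gamma)$; verifying that the generating congruence really does respect this grading in the semiring setting, where additive cancellation is unavailable, deserves some care, but once that is in place the rest is essentially bookkeeping.
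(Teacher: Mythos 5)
Your proof is correct, and while part (1) follows the paper's argument essentially verbatim (the quotient graph $\Gamma/H$ and the homomorphism killing $H$), your treatment of condition (2) takes a genuinely different and more self-contained route. Both you and the paper reduce to identifying the corner $vL_S(\Gamma)v$ with $S[x,x^{-1}]$, but the paper proves injectivity of $S[x,x^{-1}]\to S[p,p^\ast]$ by revisiting the explicit faithful representation built in Proposition~2.4 (choosing the cycle's composite operator to act as a shift on $S^{(\mathbb{Z}\times I')}$), whereas you use the $\mathbb{Z}$-grading; your grading argument is sound in the semiring setting, since the congruence $\approx$ defined by componentwise $\sim$-equivalence is itself a congruence containing the homogeneous generating pairs, hence contains $\sim$, and no additive cancellation is needed. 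The larger divergence is in how the contradiction is extracted: the paper cites two external results (that corners of ideal-simple semirings are ideal-simple, and that ideal-simple commutative semirings are semifields) and then observes $S[x,x^{-1}]$ is not a semifield, while you directly exhibit the proper nonzero ideal $\langle v+c\rangle$, reducing via the corner to the statement that $1+x$ is not a unit in $S[x,x^{-1}]$ over any commutative semiring --- and your extremal-coefficient argument for that (forcing $\min\operatorname{supp}(g)=0$ and $\max\operatorname{supp}(g)=-1$) is valid using only $a+0=a$ and the freeness of $S[x,x^{-1}]$ as an $S$-semimodule. What your approach buys is independence from the cited structure theorems and a constructive proper ideal; what the paper's buys is brevity and reuse of machinery already set up in Proposition~2.4.
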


\begin{proof}
  (1) Actually the proof of the statement given in \cite[Theorem~3.11]%
  {ap:tlpaoag05} does not use the additive ring/module setting and, 
  therefore, it can be easily modified for our (nonadditive) semiring
  setting. For the reader's convenience, we briefly sketch central
  ideas of that modification here.

  Assume that~$V$ contains a nontrivial hereditary and saturated subset~$H$.
  In the same way as was shown in \cite[Theorem 3.11]{ap:tlpaoag05}, one may
  easily observe that
  \[ \Gamma' = (V',E',r_{\Gamma'},s_{\Gamma'})
  := (V\setminus H, r^{-1}(V\setminus H), r|_{V\setminus H},
  s|_{V\setminus H}) \]
  is a graph, too.  Then, as in \cite[Theorem~3.11]{ap:tlpaoag05}, let
  us consider an $S$-algebra homomorphism $\varphi: L_{S}(\Gamma) 
  \longrightarrow L_{S}(\Gamma')$ given on the generators of the free 
  $S$-algebra $A := S \langle v, e, e^{\ast} \mid v\in V, e\in E, 
  e^{\ast}\in E^{\ast} \rangle$ as follows: $\varphi(v) = \chi_{V'}(v)v$, 
  $\varphi(e) = \chi_{E'}(e)e$ and $\varphi(e^{\ast}) = \chi_{(E')^{\ast}}
  (e^{\ast})e^{\ast}$, where $\chi_{X}$ denotes the usual characteristic 
  function of a set~$X$.  To be sure that in a such manner defined map 
  $\varphi: L_{S}(\Gamma) \longrightarrow L_{S}(\Gamma')$, indeed, provides
  us with the desired hemiring homomorphism, we only need to verify that 
  all following pairs
  
  $(v v', \delta_{v, v'} v)$ for all $v, v' \in V$,
  
  $(s(e)e, e), (e, er(e))$ and $(r(e)e^{\ast}, e^{\ast}), (e^{\ast}, 
  e^{\ast}s(e))$ for all $e\in E$,
  
  $(e^* f, \delta_{e,f} r(e))$ for all $e, f\in E$,
  
  $(v, \sum_{e\in s^{-1}(v)} e e^{\ast})$ for a regular vertex $v\in V$,

  \noindent are in the kernel congruence
  \[ \text{ker}(\varphi) := \{ (x,y)\in A^{2} \mid \varphi(x)=\varphi(y) \} \]
  of $\varphi$.  But the latter can be established right away by repeating
  verbatim the corresponding obvious arguments in the proof of \cite%
  [Theorem~3.11]{ap:tlpaoag05}. Note that $|(s_{\Gamma '})^{-1}(v)| <
  \infty$ in $\Gamma'$ for any regular vertex~$v$ in $\Gamma$.  For 
  $\varnothing \neq H \varsubsetneqq V$ and Proposition~2.4, $\varphi$
  is a nonzero homomorphism and $H\subseteq \varphi^{-1}(0)$; and
  therefore, $L_{S}(\Gamma)$ contains a proper ideal and, hence, is not
  ideal-simple.

  (2) Let~$\Gamma$ contain a cycle~$p$, based at~$v$, without any exit.  
  Then, by repeating verbatim the corresponding arguments in the proof of
  \cite[Theorem~3.11]{ap:tlpaoag05}, one gets that $vL_{S}(\Gamma)v = 
  S[p,p^{\ast}]$, \textit{i.e.}, each element in $vL_{S}(\Gamma)v$ is written in 
  the form $\sum_{i=r}^s \lambda_i p^i$, where $r,s \in \mathbb{Z}$ and 
  $\lambda_i \in S$; and let $p^0 := v$ and $p^{-j} := (p^{\ast})^j$ for all 
  $j>0$.  For $L_{S}(\Gamma)$ is ideal-simple and \cite[Proposition~5.3]%
  {knz:ososacs}, $vL_{S}(\Gamma)v$ is an ideal-simple commutative semiring as 
  well.  The latter, by \cite[Theorem 11.2]{bshhurtjankepka:scs}, implies that
  $vL_{S}(\Gamma)v = S[p,p^*]$ is a semifield.  We claim that $S[p,p^*] \cong 
  S[x, x^{-1}]$, the Laurent polynomial semiring over~$S$; as this is clearly
  not a semifield, this contradiction finishes the proof.
  
  It remains to show that the natural homomorphism $S[x, x^{-1}] \to S[p, p^*]$
  given by $f \mapsto f(p)$ is, indeed, injective.  Let~$I$, $Z = S^{(I)}$, 
  $A_e$ for $e \in E$, $A_v$ for $v \in V$, and $A$ be as in the
  proof of Proposition~2.4, and consider the endomorphisms $T_v$, $T_e$,
  $T_{e^*}$ of~$A$, for $v \in V$, $e \in E$, $e^{\ast} \in E^{\ast}$.
  Without loss of generality, we may assume that $I = \mathbb{Z} \times I'$
  for some nonempty set~$I'$.
  
  Write the cycle~$p$ based at~$v$ as $p = e_1 \dots e_n$ with $e_i \in E$,
  where $v_{i-1} := s(e_i)$ and $v_i := r(e_i)$ for $1 \le i \le n$, so that
  $v_0 = v_n = v$.  By the construction in the proof of Proposition~2.4 we
  have $A_{v_{i-1}} = A_{e_i}$, since~$p$ has no exit, and $T_{e_i}$ restricts to
  an isomorphism $A_{v_i} \longrightarrow A_{v_{i-1}}$, for all~$i$.
  Consider the endomorphism $T_p := T_{e_1} \circ \dots \circ T_{e_n}$,
  which restricts to an isomorphism $T: A_v \longrightarrow A_v$,
  where $A_v = Z = S^{(I)} = S^{(\mathbb{Z} \times I')}$.  Observe that there is 
  no limitation in choosing these isomorphisms, hence we may assume that 
  $T(\delta_{(k,i)}) = \delta_{(k+1,i)}$ for all $k \in \mathbb{Z}$ and 
  $i \in I'$, where by $\delta_{(k,i)}$ we denote the standard basis vectors 
  of the free $S$-semimodule $S^{(\mathbb{Z} \times I')}$.
  
  Now suppose that $f(p) = g(p)$ for some Laurent polynomials $f, g \in
  S[x, x^{-1}]$.  Since the $T_v$, $v \in V$, $T_e$, $e \in E$, $T_{e^{\ast}}$,
  $e^{\ast} \in E^{\ast}$, satisfy the relations described in Definition~2.1, 
  it follows that $f(T) = g(T)$ holds in $\text{Hom}_S(A, A)$.  Writing 
  $f = \sum_{j=r}^s f_j x^j$ and $g = \sum_{j=r}^s g_j x^j$ for some $r, s \in 
  \mathbb{Z}$ and $f_j, g_j \in S$, from our choice of~$T$ we see that $f(T) 
  (\delta_{0,i}) = \sum_{j=r}^s f_j T^j (\delta_{0,i}) = \sum_{j=r}^s f_j 
  \delta_{j,i}$, and similarly $g(T) (\delta_{0,i}) = \sum_{j=r}^s g_j 
  \delta_{j,i}$, for any~$i \in I'$.  From this we readily deduce that 
  $f = g$ and thus the injectivity follows.
\end{proof}

Following~\cite{ap:tlpaoag05}, a monomial in $L_{S}(\Gamma)$ is a \emph{%
real path} if it contains no terms of the form $e^{\ast}\in E^{\ast}$, and
a polynomial $\alpha \in L_{S}(\Gamma)$ is in \emph{only real edges} if it
is a sum of real paths; let $L_{S}(\Gamma)_{real}$ denote the
subhemiring of all polynomials in only real edges in $L_{S}(\Gamma)$. The
following technical observation will prove to be useful.

\begin{lem}[{\textit{cf.}~\cite[Corollary~3.2]{ap:tlpaoag05}}]
  Let $\Gamma = (V,E,s,r)$ be a graph with the property that every
  cycle has an exit and~$S$ a semifield.  Then, if $\alpha \in
  L_{S}(\Gamma)_{real} \subseteq L_{S}(\Gamma)$ is a nonzero
  polynomial in only real edges, then there exist $a, b \in
  L_{S}(\Gamma)$ such that $a \alpha b \in V$.
\end{lem}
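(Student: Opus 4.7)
The plan is induction on the number $n$ of distinct real paths $p_i$ appearing in $\alpha = \sum_{i=1}^{n} \lambda_i p_i$ with nonzero coefficients $\lambda_i \in S$.

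For the \textbf{base case} $n=1$, write $\alpha = \lambda p$ with $\lambda \in S \setminus \{0\}$; since $S$ is a semifield, $\lambda$ is invertible. Setting $a := \lambda^{-1} p^{\ast}$ and $b := r(p)$, Remark~2.7 gives $p^{\ast} p = r(p)$, hence $a\alpha b = r(p) \in V$.

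For the \textbf{inductive step} ($n \ge 2$), the goal is to produce $a', b' \in L_S(\Gamma)$ such that $a'\alpha b'$ is nonzero, still lies in $L_S(\Gamma)_{real}$, and involves strictly fewer than $n$ distinct paths; the induction hypothesis then concludes. Pre- and post-multiplying $\alpha$ by the vertices $s(p_1)$ and $r(p_1)$, using Definition~2.1(1)--(2) and Remark~2.7, one may assume that all $p_i$ share a common source $u$ and range $v$ (if this filtering already reduces the path count, apply the inductive hypothesis). Then pick $p_1$ of minimum length and form $\beta := p_1^{\ast} \alpha$. Since $|p_i| \ge |p_1|$ for every $i$, Remark~2.7 forces each $p_1^{\ast} p_i$ to equal $v$ (when $i=1$), a nontrivial closed path $q_i$ at $v$ (when $p_1$ properly precedes $p_i$), or zero; consequently
\[
  \beta \;=\; \lambda_1 v + \sum_{i \in I'} \lambda_i q_i,
\]
with every $q_i$ a nontrivial closed path based at $v$. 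If $I' = \varnothing$, then $a := \lambda_1^{-1} p_1^{\ast}$ and $b := v$ yield $a\alpha b = v$, as required.

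The \textbf{critical step}, which I expect to be the main obstacle, arises when $I' \neq \varnothing$: this is precisely where the hypothesis that every cycle has an exit is indispensable. Each $q_i$ traverses some cycle $C_i$; fixing an exit $f_i$ for $C_i$, one builds multipliers from suitable prefixes of the $q_i$'s together with factors of the shape $f_i f_i^{\ast}$ (or $f_i^{\ast}$) which, when applied to $\beta$, preserve the vertex-term $\lambda_1 v$ but annihilate at least one closed-path summand $\lambda_i q_i$. Iterating strictly decreases the number of surviving real-path summands, and the inductive hypothesis then applies. The delicate point is that in the semiring setting no subtraction is available, so the selective annihilation must be arranged without relying on cancellation; the semifield assumption ensures via Proposition~2.4(2) that surviving coefficients remain nonzero, and the cycle-exit condition excludes exactly the obstruction (a cycle without exit) that would otherwise allow $v L_S(\Gamma) v$ to contain a Laurent-polynomial-type subalgebra resisting any such reduction.
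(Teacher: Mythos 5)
Your reduction to $\beta = \lambda_1 v + \sum_{i \in I'} \lambda_i q_i$ with the $q_i$ distinct nontrivial closed paths based at $v$ is correct, and it is essentially the first half of the paper's own argument (the paper chooses a path $p$ none of whose proper prefixes occurs among the summands, which is exactly what your minimal-length choice of $p_1$ achieves); the base case and the case $I'=\varnothing$ are fine. But the step you yourself flag as ``the critical step'' is precisely where the proof stops being a proof: you assert the existence of multipliers built from exits that ``preserve the vertex-term but annihilate at least one closed-path summand,'' without constructing them, and the one-summand-at-a-time iteration you propose has concrete obstructions. The closed paths $q_i$ need not be powers of a single closed simple path --- they can interleave several distinct elements of $\text{CSP}(v)$ (e.g.\ $q_2 = cd$ and $q_3 = dc$ with $c \ne d$ in $\text{CSP}(v)$) --- and conjugating $\beta$ by a multiplier adapted to one summand transforms the others unpredictably: by Remark~2.7 a term $z^{\ast} q_k z$ can survive as a different monomial, two distinct summands can be sent to the same monomial (so their coefficients add, and since every field is a semifield these coefficients can cancel), and the number of distinct surviving summands need not strictly decrease. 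Your induction therefore has no guaranteed decreasing quantity, and this is not a routine detail but the actual content of the lemma.

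The paper closes this gap with a two-stage argument that you are missing. Stage one: fix a single $c \in \text{CSP}(v)$ and write each $q_i = c^{n_i} q_i'$ with $n_i$ maximal; if $q_i' \ne v$ then $q_i' = d_i q_i''$ for some $d_i \in \text{CSP}(v)$ with $d_i \ne c$, whence $(c^{\ast})^{n_i+1} q_i = c^{\ast} d_i q_i'' = 0$. Taking $n$ larger than every $n_i$, the single conjugation $\beta \mapsto (c^{\ast})^n \beta c^n$ kills every $q_i$ that is not a pure power of $c$ and fixes the rest, collapsing $\beta$ to $\alpha' = \lambda_1 v + cP(c)$. Stage two: since $c$ contains a cycle and every cycle has an exit, there is an exit $f$ with $s(f) = s(e_j)$, $f \ne e_j$; setting $z := e_1 \cdots e_{j-1} f$ gives $z^{\ast} z = r(z)$ and $z^{\ast} c = 0$, so $\lambda_1^{-1} z^{\ast} \alpha' z = r(z) \in V$ annihilates all remaining closed-path summands simultaneously --- no iteration and no cancellation are needed. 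Until you either supply this collapse (or an honestly decreasing invariant for your iteration), the inductive step is unproven.
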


\begin{proof}
  The proof of~\cite[Corollary~3.2]{ap:tlpaoag05} does not use the
  ``additiveness'' of the setting and, therefore, repeating verbatim
  the latter, one gets the statement in our nonadditive setting as
  well. However, we provide a new proof which is much shorter than the
  Abrams and Aranda Pino's original proof.

  Namely, we write $\alpha$ in the form $\alpha = \sum_i \lambda_i
  q_i$ with $q_i$ distinct real paths and $0 \ne \lambda_i \in S$.  Out
  of the set $\{ q_i \}$ choose $p$ such that no proper prefix path
  of~$p$ is contained therein.  Let $v = r(p)$.  Then, using Remark~2.7
  we get $p^* \alpha v = \lambda v + \sum_i \lambda_i p^* q_i$, where
  the sum is over all~$q_i$ that have~$p$ as a proper prefix path and
  $r(q_i) = v$, so that $p^* q_i \in \text{CP}(v)$.

  Hence, without loss of generality, we may assume that $\alpha = \lambda v
  + \sum^n_{i=1} \lambda_i p_i$, where $p_i \in \text{CP}(v)$ of positive length
  and $0\ne\lambda \in S$. Fix some $c \in \text{CSP}(v)$.  For any $p_i \in
  \text{CP}(v)$ we may write $p_i = c^{n_i} p_i'$ with $n_i \in \mathbb N$
  maximal, so that either $p_i' = v$ or $p_i' = d_i p_i''$ with $d_i \in
  \text{CSP}(v)$, $d_i \ne c$, in which case $(c^*)^{n_i+1} p_i = c^* p_i' =
  c^* d_i p_i'' = 0$ by Remark~2.7.  With $n := \max \{n_i \mid i = 1,
  \dots, n\} + 1$, we then have that $(c^*)^n p_i c^n = p_i$ if $p_i = c^{n_i}$,
  and $(c^*)^n p_i c^n = 0$ otherwise.  Therefore, we have $\alpha' :=
  (c^*)^n \alpha c^n = \lambda v + \sum_j \lambda_j c^{n_j}$ with $n_j
  > 0$, \textit{i.e.}, $\alpha' = \lambda v + c P(c)$ for some polynomial~$P$.
  Now, we write $c$ in the form $c = e_1 \dots e_m$. By our hypothesis
  and \cite[Lemma 2.5]{ap:tlpaoag05}, there exists an exit $f\in E$ for~$c$,
  that is, there exists $j\in \{1, \dots, m\}$ such that $s(f) = s(e_j)$
  but $f\neq e_j$. Let $z:= e_1 \dots e_{j-1}f$. We get that $s(z) = v$ and
  $z^*c = 0$, so that  $\lambda^{-1}z^* \alpha' z =  z^* z + \lambda^{-1}z^*
  c P(c) z = r(z) \in V$, as desired.
\end{proof}

As was shown in \cite[Theorem~6]{col:tsiilpa}, every nonzero ideal of the
Leavitt path algebra of a row-finite graph with coefficients in a field
always contains a nonzero polynomial in only real edges. The following
observation extends this result to the Leavitt path algebra over an
arbitrary graph.

\begin{prop}
  Let $\Gamma = (V,E,s,r)$ be a graph and~$S$ a commutative semiring.
  Then any nonzero ideal~$I$ of $L_{S}(\Gamma)$ contains a nonzero
  polynomial in only real edges.
\end{prop}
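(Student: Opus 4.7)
Let $\alpha \in I \setminus \{0\}$. By Proposition~2.4(3) I write $\alpha = \sum_{i=1}^n \lambda_i p_i q_i^*$ with $\lambda_i \in S \setminus \{0\}$, paths $p_i, q_i$, and $r(p_i) = r(q_i)$. Since only finitely many vertices appear in this expression, the decomposition $\alpha = \sum_{v, v' \in V} v' \alpha v$ is a finite sum and some summand $v' \alpha v$ is nonzero; replacing $\alpha$ by it, I may assume $s(p_i) = v'$ and $s(q_i) = v$ for all~$i$, and set $M := \max_i |q_i|$.

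The plan is to produce a path $q$ with $s(q) = v$ and $|q| \ge M$ for which $\alpha q \in I$ is nonzero. For any such $q$, Remark~2.7 forces each product $q_i^* q$ to simplify to a real suffix of $q$ (including the vertex $r(q_i)$ when $q_i = q$) or to zero: the ghost possibility $q_i^* q = (q_i')^*$ would require $q_i = q q_i'$ with $|q_i'| > 0$, in violation of $|q_i| \le M \le |q|$. Hence $\alpha q = \sum_{q_i \preceq q} \lambda_i p_i (q_i^* q)$ lies in $L_S(\Gamma)_{\text{real}} \cap I$, giving the desired element. To exhibit such a~$q$, I iterate the Leavitt relation~(4) starting from~$v$: whenever every vertex met within distance~$M$ from~$v$ is regular, the iterated relation yields $v = \sum_{s(q)=v,\, |q|=M} q q^*$ in $L_S(\Gamma)$, so $\alpha = \sum_q (\alpha q) q^*$ and the non-vanishing of $\alpha$ forces some $\alpha q \ne 0$ with $|q| = M$, completing the argument in this case.

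The main obstacle is handling non-regular vertices, that is, sinks and infinite emitters. If $v$ is itself a sink, then every $q_i$ must equal~$v$ and $\alpha$ is already in $L_S(\Gamma)_{\text{real}}$. If the Leavitt expansion halts at a sink $w = r(q')$ with $|q'| < M$, the no-dagger property of $\alpha q'$ still holds automatically, because no positive-length path starts at~$w$ and so the ghost case of Remark~2.7 cannot arise for the relevant summand. In the infinite-emitter case at some~$w$ reached by a path $q'$ (possibly $q' = v$), only finitely many edges $f_1, \dots, f_k \in s^{-1}(w)$ appear as initial edges of the tails $q_i'$ of those $q_i = q' q_i'$ with $|q_i'|>0$; selecting any other edge $f \in s^{-1}(w) \setminus \{f_1, \dots, f_k\}$ and replacing $q'$ by~$q' f$ annihilates the surviving ghost factors, after which one iterates the Leavitt expansion again. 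The delicate part is preserving non-vanishing through each extension without access to subtraction in~$S$, which I would handle by tracking the image of the relevant products in the free $S$-semimodule realization $A$ from the proof of Proposition~2.4 and choosing each extension edge so that the corresponding endomorphism of $A$ remains nonzero.
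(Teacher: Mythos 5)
Your argument is clean and complete for row-finite graphs: after localizing at a vertex $v$, iterating relation~(4) to depth $M=\max_i|q_i|$ (stopping early at sinks) gives $\alpha=\sum_{q\in T}(\alpha q)q^{\ast}$ over a finite set $T$ of paths, so some $\alpha q\neq 0$, and your prefix analysis via Remark~2.7 correctly shows each such $\alpha q$ lies in $L_S(\Gamma)_{real}$; this is genuinely simpler than the paper's argument in that case. But the proposition is stated for arbitrary graphs, and the infinite-emitter case --- which is where the paper spends essentially all of its effort (its Cases~1 and~2) --- is exactly where your argument has a gap. At an infinite emitter $w=r(q')$ you cannot invoke relation~(4), so the non-vanishing of $\alpha q'$ does not transfer to any single extension $\alpha q'f$. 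Writing $\alpha q'=\beta+\gamma$ with $\beta$ the real part and $\gamma$ the part with surviving ghost tails, your choice of a ``fresh'' edge $f\notin\{f_1,\dots,f_k\}$ gives $\alpha q'f=\beta f$, which may be $0$: certainly if $\beta=0$, and even if $\beta\neq 0$ you would need an unproved independence statement for real-path monomials to rule out $\beta f=0$. Choosing instead $f=f_j$ keeps ghost terms, and since $S$ is an arbitrary commutative semiring (in particular possibly a ring), $\beta f_j+\gamma f_j$ can vanish without either summand vanishing. There is no finite set $F\subseteq s^{-1}(w)$ with $\alpha q'=\sum_{f\in F}(\alpha q'f)f^{\ast}$, so ``some extension survives'' is simply not available.

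Your proposed repair --- tracking images in the semimodule representation $A$ of Proposition~2.4 --- does not close this gap: that representation is only shown to be nonzero on generators and to separate scalar multiples of vertices; it is not shown to be faithful, so $\pi(\alpha q'f)=0$ tells you nothing, and $\pi(\alpha q'f)\neq 0$ for a suitable $f$ is exactly the kind of independence statement you would still have to prove. The paper avoids the whole issue by a global double-minimality argument: assuming $I\cap L_S(\Gamma)_{real}=0$, it picks $\alpha\in I$ with a minimal number $d$ of monomials and then lexicographically minimal ghost-length vector $(|q_1|,\dots,|q_d|)$, deduces $\alpha e=0$ for \emph{every} edge $e$, rules out regular $w$ by relation~(4), and handles the infinite emitter by conjugating with a carefully chosen $e^{\ast}p^{\ast}(\cdot)e$ to extract $\lambda r(e)=0$ for some $\lambda\neq 0$, contradicting Proposition~2.4\,(2). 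Some substitute for that minimality (or an actual proof of independence of the real-path monomials) is needed before your construction goes through for non-row-finite graphs.
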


\begin{proof}
  Let $I_{real} := I \cap L_{S}(\Gamma)_{real}$ for a nonzero ideal~$I$, 
  and suppose that $I_{real} = 0$.  Choose $0 \ne \alpha = \sum_{i=1}^{d}
  \lambda_{i} p_{i} q_{i}^{\ast}$ in $I$, where $d$ is minimal such that
  $p_{1}, \dots, p_{d}, q_{1}, \dots, q_{d}$ are paths in $\Gamma$ and 
  $0 \ne \lambda_{i} \in S$, $i = 1, \dots, d$.  By using \cite[Remark~3]%
  {col:tsiilpa}, as in the proof \cite[Lemma~4]{col:tsiilpa},
  one can easily get that the element~$\alpha$ can be presented in the form
  $\alpha = \mu_{1} + \dots + \mu_{m}$, where all monomials in $\mu_{j}\in I$,
  $j = 1, \dots, m$, have the same source and the same range.  Moreover,
  for $\alpha \ne 0$ and the minimality of $d$, we can assume that actually
  $\alpha = \sum_{i=1}^{d} \lambda_{i} p_{i} q_{i}^{\ast}$ with $s(p_{i}) = s(p_{j})$
  and $s(q_{i}) = s(q_{j}) = w \in V$ for all~$i$ and~$j$.  Among all such
  $\alpha = \sum_{i=1}^{d} \lambda_{i} p_{i} q_{i}^{\ast} \in I$ with minimal~$d$, 
  select one for which $(|q_{1}|, \dots, |q_{d}|)$ is the smallest in the 
  lexicographic order of $\mathbb{N}^{d}$.  Obviously, $|q_{i}| > 0$ for 
  some~$i$ (otherwise, $0 \ne \alpha \in I_{real} = 0$).  If $e \in E$, then
  \[ \alpha e = \smsum_{i=1}^{d} \lambda_{i} p_{i} q_{i}^{\ast} e
  = \smsum_{i=1}^{d'} \lambda_{i} p_{i}' (q_{i}')^{\ast}, \]
  where we either have $d' < d$, or $d' = d$ and $(|q_{1}'|, \dots, 
  |q_{d}'|)$ is smaller than $(|q_{1}|, \dots, |q_{d}|)$.  
  Whence, for the minimality of $(|q_{1}|, \dots, |q_{d}|)$, we get 
  $\alpha e = 0$ for all $e\in E$.  For $|q_{i}| > 0$ for some~$i$, we have 
  that~$w$ is not a sink, and if it is a regular vertex, we have
  \[ 0 \ne \alpha = \alpha w = \alpha \smsum_{e \in s^{-1}(w)} e e^{\ast} =
  \smsum_{e \in s^{-1}(w)} (\alpha e) e^{\ast} = 0 . \]
  Therefore, we need only to consider two possible cases when the
  vertex~$w$ emits infinitely many edges:

  \emph{Case~1.} Let $|q_{j}| > 0$ for all~$j$, and $A := \{ e \in
  s^{-1}(w) \mid q_{i}^{\ast}e \ne 0$ for some $1 \le i \le d\}$.
  Notice that $q_{i}^{\ast} e \ne 0$ if and only if the path~$q_i$
  has the form $q_i = f_1 \dots f_k$ with $k \ge 1$ and $f_1 = e$.
  Specially, in this case, we have that $q_{i}^{\ast} e e^{\ast} = 
  q_{i}^{\ast}$.  It is clear that $|A| < \infty$, and hence, $\alpha = 
  \sum_{e\in A} \alpha e e^{\ast}$.  For $\alpha e = 0$ for all $e\in E$, we 
  have $0 \ne \alpha = \sum_{e\in A} \alpha e e^{\ast} = 0$.

  \emph{Case~2.} If $|q_{j}| = 0$ for some~$j$, the element~$\alpha$ can be
  presented as
  \[ \alpha = \lambda_1 p_{1} + \dots + \lambda_m p_{m} + \lambda_{m+1} p_{m+1} 
  q_{m+1}^{\ast} + \dots + \lambda_d p_{d} q_{d}^{\ast}, \]
  where $p_1, \dots, p_m$ are distinct paths in~$\Gamma$ and
  $r(p_{i}) = w = s(q_{j})$ for all $i = 1, \dots, d$ and $j = m+1,
  \dots, d$.  Set $\beta := \lambda_1 p_{1} + \dots + \lambda_mp_{m}$.
  By Remark~2.7, we may choose a path~$p$ in $\Gamma$ such that 
  \[ p^{\ast} \beta = \lambda w + \smsum_{j=1}^k \nu_j p'_j, \]
  where $0 \ne \lambda \in S$, $\nu_j\in S$ and $p'_j \in \text{CP}(w)$ for
  all~$j$.  For~$w$ emits infinitely many edges, there is an edge $e \in
  s^{-1}(w)$ such that $q_{i}^{\ast} e = 0 = e^{\ast} p'_j$ for all $i = m+1,
  \dots, d$ and $j = 1, \dots, k$.  Then, $0 = \alpha e = \beta e \in I$
  and, hence, $p^{\ast} \beta e = \lambda e + \sum_{j=1}^k \nu_j p'_j e = 0$.
  It implies that $e^{\ast} p^{\ast} \beta e = \lambda r(e) = 0$.  Using 
  Proposition~2.4\,(2), we get that $\lambda = 0$, a contradiction.

  Hence, the ideal~$I$ contains a nonzero polynomial in only real edges.
\end{proof}

In \cite[Theorem~3.11]{ap:tlpaoag05}, the authors characterized the simple
Leavitt path algebras over countable row-finite graphs with coefficients in
a field. Then, the row-finiteness hypothesis independently was eliminated by
the authors (\cite[Theorem~3.1]{ap:tlpaoag08}) and in (\cite[Theorem~6.18]%
{tomf:utaisflpa}), and finally this characterization has been extended in
\cite[Theorem~3.11]{g:lpaadl} to arbitrary graphs. The next and main result
of this section is an extension of the latter characterization to the
Leavitt path algebras with coefficients in a semifield.

\begin{thm}
  A Leavitt path algebra $L_{S}(\Gamma)$ of a graph $\Gamma = (V,E,s,r)$ with
  coefficients in a semifield~$S$ is ideal-simple if and only if
  the graph~$\Gamma$ satisfies the following two conditions:

  (1) The only hereditary and saturated subset of~$V$ are~$\varnothing$ 
  and~$V$;

  (2) Every cycle in~$\Gamma$ has an exit.
\end{thm}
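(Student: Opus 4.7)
The forward direction is precisely Proposition~3.1, so the real content is the reverse implication: assuming (1) and (2), the plan is to show that every nonzero ideal $I$ of $L_{S}(\Gamma)$ must equal the whole algebra. The strategy is to combine the three preparatory results already established (Lemma~2.6, Lemma~3.2, Proposition~3.3) in a clean chain, and then upgrade ``$I$ contains a vertex'' to ``$I$ is everything'' via the local units from Proposition~2.5.

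More concretely, the first step is to apply Proposition~3.3 to obtain a nonzero polynomial $\alpha \in I \cap L_{S}(\Gamma)_{real}$ in only real edges. Since condition~(2) is in force, Lemma~3.2 then furnishes elements $a, b \in L_{S}(\Gamma)$ with $a \alpha b \in V$. In particular, $I \cap V$ is nonempty. Next, Lemma~2.6 tells us that $I \cap V$ is a hereditary and saturated subset of~$V$, and condition~(1) forces $I \cap V = V$, \emph{i.e.}, every vertex of~$\Gamma$ lies in~$I$.

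It then remains to deduce $I = L_{S}(\Gamma)$. If~$V$ is finite, Proposition~2.5 gives $1_{L_{S}(\Gamma)} = \sum_{v \in V} v \in I$, so $I = L_{S}(\Gamma)$. If~$V$ is infinite, then by Proposition~2.5 the set of finite sums of distinct vertices is a set of local units for $L_{S}(\Gamma)$; given any $x \in L_{S}(\Gamma)$, a suitable finite sum $f = v_{1} + \dots + v_{k}$ of vertices satisfies $fx = x$, and since each $v_{i} \in I$ we have $f \in I$, whence $x = fx \in I$. Hence $I = L_{S}(\Gamma)$ in both cases, proving ideal-simpleness.

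I do not expect a genuine obstacle here, since the heavy lifting has been isolated into Proposition~3.3 (existence of a polynomial in only real edges inside any nonzero ideal) and Lemma~3.2 (extracting a vertex from such a polynomial using the exit condition~(2)). The only mild care needed is the non-unital case, handled uniformly by the local units of Proposition~2.5; this is the one place where a reader accustomed to the classical unital/ring setting might stumble, and it is worth writing out explicitly rather than appealing to ``the usual argument.''
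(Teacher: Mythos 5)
Your proof is correct and follows essentially the same route as the paper's: Proposition~3.3 to find a real-edge polynomial in the ideal, Lemma~3.2 (via condition~(2)) to land a vertex in the ideal, Lemma~2.6 with condition~(1) to get all of~$V$, and Proposition~2.5 to conclude. You merely spell out the final local-units step, which the paper leaves implicit.
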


\begin{proof}
  $\Longrightarrow$. It follows from Proposition~3.1.

  $\Longleftarrow$. Let $I$ be a nonzero ideal of $L_{S}(\Gamma)$. By
  Proposition~3.3, $I$ contains a nonzero polynomial~$\alpha$ in only real
  edges.  By Lemma~3.2, there exist $a, b \in L_{S}(\Gamma)$ such that 
  $a \alpha b \in V$, \textit{i.e.}, $I \cap V \ne \varnothing$.  Now, 
  applying Lemma~2.6 and Proposition~2.5, we conclude that $I = 
  L_{S}(\Gamma)$.
\end{proof}

Taking into consideration \cite[Theorem~7.20]{tomf:lpawciacr}, the following
question seems to be reasonable, interesting and promising. \medskip

\noindent \textbf{Problem.} How far can Theorem~3.4 be extended for the
commutative ground semiring $S$? \medskip

We finish this section by demonstrating the use of Theorem~3.4 in
re-establishing the ideal-simpleness of the Leavitt path algebras of
Examples~2.3.

\begin{exas}[{\textit{cf.}~\cite[Corollary~3.13]{ap:tlpaoag05}}]
  Note that all Leavitt path algebras in these examples are algebras
  with coefficients in a semifield $S$.

  (i) By \cite[Proposition 4.7]{knt:mosssparp}, $M_{n}(S)$ is an ideal-simple
  algebra.  However, this fact can also be justified by Theorem~3.4, since it 
  is easy to check that the graph $\Gamma$ of Examples~2.3\,(i) satisfies (1)
  and (2) of Theorem~3.4.
  
  (ii) By Examples~2.3\,(ii), the Laurent polynomial algebras $S[x,x^{-1}]
  \cong L_{S}(\Gamma)$ where the graph $\Gamma$ contains a cycle without an 
  exit, and therefore, by Theorem~3.4, $S[x,x^{-1}]$ is not ideal-simple.
  
  (iii) By Examples~2.3\,(iii), the Leavitt algebras $L_{S}(1,n)$ for $n\geq 2$
  are isomorphic to the Leavitt path algebras $L_{S}(\Gamma)$ such that for
  the graphs~$\Gamma$ conditions~(1) and~(2) of Theorem~3.4 are obviously
  satisfied, and therefore, the algebras $L_{S}(1,n)$ are ideal-simple.
  (Note that we consider here an $S$-algebra analog of a Leavitt algebra over
  a field, see \cite[Theorem~2]{leav:tmtohi}).
\end{exas}


\section{Congruence-simpleness of Leavitt path algebras
  with coefficients in a commutative semiring}%
\label{sec:congr}

Providing necessary and sufficient conditions for a Leavitt path algebra
over a row-finite graph with coefficients in a commutative semiring to be
congruence-simple is the main goal of this section. We start with necessary 
conditions for such algebras to be congruence-simple, namely:

\begin{prop}
  For a congruence-simple Leavitt path algebra $L_{S}(\Gamma)$ of a
  graph $\Gamma = (V,E,s,r)$ with coefficients in a commutative
  semiring $S$ the following statements are true:

  (1) $S$ is either a field, or the Boolean semifield~$\mathbf{B}$;
  
  (2) The only hereditary and saturated subset of~$V$ are~$\varnothing$
  and~$V$;
  
  (3) Every cycle in~$\Gamma$ has an exit.
\end{prop}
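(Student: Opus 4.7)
The plan is to prove the three conditions in order, in each case building a hemiring homomorphism whose kernel congruence is demonstrably nontrivial (neither diagonal nor universal), thereby contradicting the congruence-simpleness of $L_S(\Gamma)$. Parts~(2) and~(3) borrow heavily from the constructions already used for the ideal-simple version in Proposition~3.1, while part~(1) requires a new use of functoriality in the coefficient semiring.

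For~(1), I would exploit the functoriality of the Leavitt path algebra in its coefficient semiring: given any congruence $\sigma$ on $S$, Remark~2.2 produces a hemiring homomorphism $\tilde\sigma \colon L_S(\Gamma) \to L_{S/\sigma}(\Gamma)$ that is the quotient $S \to S/\sigma$ on coefficients and the identity on the generators $v, e, e^*$. If $\sigma$ is a proper nontrivial congruence, pick $(a,b) \in \sigma$ with $a \ne b$ and any $v \in V$: Proposition~2.4(2) gives $av \ne bv$ while $(av, bv) \in \ker\tilde\sigma$, whereas Proposition~2.4(1) applied inside $L_{S/\sigma}(\Gamma)$ gives $\tilde\sigma(v) = v \ne 0$. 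So $\ker\tilde\sigma$ is a nontrivial congruence on $L_S(\Gamma)$, which forces $S$ itself to be congruence-simple. The classification of congruence-simple commutative semirings with unit from \cite{bshhurtjankepka:scs} then yields that $S$ is either a field or the Boolean semifield~$\mathbf{B}$.

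For~(2), I would take the exact homomorphism $\varphi \colon L_S(\Gamma) \to L_S(\Gamma')$ constructed in the proof of Proposition~3.1(1) from a hypothetical nontrivial hereditary and saturated set~$H$. Choosing $v \in H$ and $w \in V \setminus H$: Proposition~2.4(1) ensures $v, w \ne 0$, yet $\varphi(v) = 0$ and $\varphi(w) = w \ne 0$; so $(v,0) \in \ker\varphi$ with $v \ne 0$ and $(w,0) \notin \ker\varphi$. Thus $\ker\varphi$ is a nontrivial congruence on $L_S(\Gamma)$, contradicting congruence-simpleness.

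For~(3), suppose a cycle $p$ based at $v$ has no exit. The portion of the proof of Proposition~3.1(2) that establishes $vL_S(\Gamma)v = S[p, p^*]$, together with the injectivity argument for the natural map $S[x, x^{-1}] \to S[p, p^*]$ (neither of which uses ideal-simpleness), yields an isomorphism of $S$-algebras $vL_S(\Gamma)v \cong S[x, x^{-1}]$. The key new ingredient is a congruence analogue of \cite[Proposition~5.3]{knz:ososacs}: for any hemiring $R$ with local units and any nonzero idempotent $v \in R$, congruence-simpleness of $R$ descends to congruence-simpleness of the corner~$vRv$. I would prove this by associating to every congruence $\tau$ on $vRv$ the relation $\bar\tau$ on $R$ defined by $a \bar\tau b$ iff $(xay, xby) \in \tau$ for all $x \in vR$ and $y \in Rv$; one checks that $\bar\tau$ is indeed a congruence, and specializing to $x = y = v$ shows that nontriviality transfers in both directions. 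Applying this lemma to our vertex~$v$ would force $S[x,x^{-1}]$ to be congruence-simple, which contradicts~(1): if $S$ is a field, $x-1$ generates a proper ideal, and if $S = \mathbf{B}$, the evaluation $x \mapsto 1$ produces a nontrivial congruence identifying the distinct elements $1$ and $1+x$. The main obstacle I anticipate is the corner-simpleness lemma itself — specifically, verifying that $\bar\tau \ne R^2$ whenever $\tau \ne (vRv)^2$, which requires careful bookkeeping of the $v$-bimodule structure of the corner and genuine use of the idempotency $v^2 = v$.
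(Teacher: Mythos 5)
Your proposal is correct and follows essentially the same route as the paper: part~(1) via the induced homomorphism $L_S(\Gamma)\to L_{S/\sigma}(\Gamma)$ together with the classification of congruence-simple commutative semirings, part~(2) via the homomorphism $\varphi\colon L_S(\Gamma)\to L_S(\Gamma')$ from Proposition~3.1(1), and part~(3) via the corner $vL_S(\Gamma)v\cong S[x,x^{-1}]$. The only deviations are cosmetic: the paper simply cites \cite[Proposition~5.3(2)]{knz:ososacs} for the descent of congruence-simpleness to corners (which your $\bar\tau$ construction correctly re-proves) and rules out congruence-simpleness of $S[x,x^{-1}]$ by invoking the classification theorem once more, rather than by exhibiting explicit nontrivial congruences as you do.
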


\begin{proof}
  (1) First, let us show that there are only the two trivial congruences 
  on~$S$.  Indeed, if $\sim$ is a proper congruence on~$S$, the natural 
  surjection $\pi: S \longrightarrow \overline{S} := S / {\sim}$, defined
  by $\pi(\lambda) = \overline{\lambda}$, is neither zero nor an injective
  homomorphism.  As one can easily verify, the homomorphism~$\pi$ induces
  a nonzero surjective hemiring homomorphism $\varphi: L_{S}(\Gamma)
  \longrightarrow L_{\overline{S}}(\Gamma)$ such that $\varphi (\lambda p
  q^{\ast}) = \overline{\lambda} p q^{\ast}$, where $\lambda \in S$ and
  $p, q$ are paths in $\Gamma$ with $r(p) = r(q)$.  For $\pi$ is not 
  injective, there exist two distinct elements $a, b \in S$ such that 
  $\overline{a} = \overline{b}$ and, by Proposition~2.4\,(2), $a v \ne b v$
  in $L_{S}(\Gamma)$ for any $v\in V$. However,
  \[ \varphi (a v) = \overline{a} v = \overline{b} v = \varphi (b v) , \]
  and hence, $\varphi$ is not injective, and therefore, $L_{S}(\Gamma)$ is
  not congruence-simple. Thus, $S$ is congruence-simple, and it follows by
  \cite[Theorem~10.1]{bshhurtjankepka:scs} (see also \cite[Theorem~3.2]%
  {mf:ccs}) that~$S$ is either a field, or the semifield~$\mathbf{B}$.
  
  (2) A proof of this statement is established by the proof of
  Proposition~3.1\,(1): Indeed, in the notation of the latter, one readily
  concludes that the map $\varphi: L_{S}(\Gamma) \longrightarrow
  L_{S}(\Gamma')$ is a nonzero homomorphism and $H\subseteq \varphi
  ^{-1}(0)$, and hence, $L_{S}(\Gamma)$ is not congruence-simple.
  
  (3) This statement can be proven analogously to the proof of
  Proposition~3.1\,(2); and in the notations of the latter, one readily
  concludes that $v L_S(\Gamma) v = S[p,p^*] \cong S[x, x^{-1}]$, the Laurent
  polynomial semiring over~$S$.  By \cite[Proposition~5.3\,(2)]{knz:ososacs},
  $vL_{S}(\Gamma)v$ is a congruence-simple semiring; that means, $S[x, x^{-1}]$
  is congruence-simple, too.  This would imply, by \cite[Theorem~10.1]%
  {bshhurtjankepka:scs} (see also \cite[Theorem~3.2]{mf:ccs}), that
  $S[x, x^{-1}]$ is either a field or the Boolean semifield $\mathbf{B}$,
  what is obviously is not a case.
\end{proof}

Combining Theorem 3.4 and Proposition 4.1, one immediately obtains that the
congruence-simpleness of a Leavitt path algebra over an arbitrary graph with
coefficients in a commutative semiring implies its ideal-simpleness, what,
in turn, actually resolves \cite[Problem 2]{knt:mosssparp} in the class of
Leavitt path algebras, namely:

\begin{cor}
  A congruence-simple Leavitt path algebra $L_{S}(\Gamma)$ over an
  arbitrary graph~$\Gamma$ with coefficients in a commutative
  semiring~$S$ is ideal-simple as well.
\end{cor}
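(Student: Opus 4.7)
The plan is to deduce this corollary by a straightforward chaining of the two preceding main results, with no new structural argument required. Proposition~4.1 supplies necessary conditions for congruence-simpleness, and these conditions will turn out to coincide exactly with the sufficient conditions for ideal-simpleness recorded in Theorem~3.4.

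First I would apply Proposition~4.1 under the hypothesis that $L_{S}(\Gamma)$ is congruence-simple. This yields three conclusions: (a) the ground semiring $S$ is either a field or the Boolean semifield $\mathbf{B}$, hence in either case a semifield; (b) the only hereditary and saturated subsets of $V$ are $\varnothing$ and $V$; and (c) every cycle in $\Gamma$ has an exit. Conclusion~(a) is what places us squarely in the setting where Theorem~3.4 is applicable, while (b) and (c) are the two combinatorial hypotheses of that theorem.

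Next I would invoke the ``if'' direction of Theorem~3.4 with the semifield~$S$ and the graph~$\Gamma$ just described, which immediately gives that $L_{S}(\Gamma)$ is ideal-simple, as desired. Note that no row-finiteness assumption on~$\Gamma$ is required anywhere in this chain, so the conclusion indeed holds for an arbitrary graph, as stated. Since Proposition~4.1 and Theorem~3.4 already carry all of the technical weight, I anticipate no genuine obstacle here; the only thing that has to be verified is compatibility of the hypotheses of the two results, and this is immediate from the fact that both fields and $\mathbf{B}$ fall under the umbrella of semifields. This bookkeeping is precisely what resolves \cite[Problem~2]{knt:mosssparp} within the class of Leavitt path algebras.
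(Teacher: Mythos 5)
Your proposal is correct and is exactly the paper's argument: the paper deduces the corollary by combining Proposition~4.1 (which yields that $S$ is a field or $\mathbf{B}$, hence a semifield, and that conditions (2) and (3) hold) with the sufficiency direction of Theorem~3.4. No further comment is needed.
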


Next, modifying the ideas and techniques used in the proof of~\cite%
[Theorem~6]{col:tsiilpa}, we obtain a semiring version of this result
for the Leavitt path algebras over the Boolean semifield~$\mathbf{B}$.

\begin{prop}
  Let $\Gamma = (V,E,s,r)$ be a row-finite graph, $\rho$ a congruence
  on $L_{\mathbf{B}} (\Gamma)$, and $\rho_{real} := \rho \cap 
  (L_{\mathbf{B}} (\Gamma)_{real})^{2}$.  Then~$\rho$ is generated by
  $\rho_{real}$.
\end{prop}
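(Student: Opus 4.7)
The plan is to let $\rho'$ denote the congruence on $L_{\mathbf{B}}(\Gamma)$ generated by $\rho_{real}$. The inclusion $\rho' \subseteq \rho$ is immediate since $\rho_{real} \subseteq \rho$ and $\rho$ is a congruence, so the substance lies entirely in proving $\rho \subseteq \rho'$. I propose to establish this by induction on a ``ghost complexity'' measure: define $c(x) := \max |q|$, where the maximum ranges over the monomials $pq^{*}$ in the canonical expression of $x$ provided by Proposition~2.4(3), with the convention $c(0) = 0$. Note that $c(x) = 0$ exactly when $x$ is real.

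For the base case, if $c(\alpha) = c(\beta) = 0$, then both $\alpha$ and $\beta$ lie in $L_{\mathbf{B}}(\Gamma)_{real}$, so $(\alpha, \beta) \in \rho \cap (L_{\mathbf{B}}(\Gamma)_{real})^{2} = \rho_{real} \subseteq \rho'$. For the inductive step, suppose $(\alpha, \beta) \in \rho$ with $\max(c(\alpha), c(\beta)) = n \ge 1$. Using the local units from Proposition~2.5, decompose $\alpha = \sum_{v \in F} \alpha v$ and $\beta = \sum_{v \in F} \beta v$ over a finite set $F \subseteq V$; right multiplication by $v$ preserves the congruence, so $(\alpha v, \beta v) \in \rho$ for each $v$. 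If $v$ is a sink, every surviving monomial $pq^{*}$ in $\alpha v$ satisfies $s(q) = v$ and hence $|q| = 0$, since the only path based at a sink is the vertex itself; thus $\alpha v$ and $\beta v$ are real, yielding $(\alpha v, \beta v) \in \rho_{real} \subseteq \rho'$. If $v$ is not a sink, then it is regular by row-finiteness, so CK2 yields the finite identity $v = \sum_{e \in s^{-1}(v)} e e^{*}$, and hence $\alpha v = \sum_{e \in s^{-1}(v)} (\alpha e) e^{*}$ and similarly for $\beta v$.

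Appealing to Remark~2.7, right multiplication of a monomial $pq^{*}$ by $e$ either annihilates it (when $q$ does not begin with $e$) or replaces $q^{*}$ by $q'^{*}$ with $|q'| = |q| - 1$; consequently $c(\alpha e), c(\beta e) \le n-1$ for every $e \in s^{-1}(v)$. Combined with $(\alpha e, \beta e) \in \rho$, the inductive hypothesis gives $(\alpha e, \beta e) \in \rho'$, and closing $\rho'$ under right multiplication by $e^{*}$ and summation over the finite set $s^{-1}(v)$ recovers $(\alpha v, \beta v) \in \rho'$; summing over $v \in F$ then yields $(\alpha, \beta) \in \rho'$, completing the induction. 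I expect the main obstacle to be the bookkeeping around the vertex decomposition $\alpha = \sum_v \alpha v$, verifying that right-multiplying the canonical form by a vertex genuinely picks out precisely the monomials with $s(q_i) = v$, and checking that the ghost complexity strictly drops under right multiplication by an edge. Row-finiteness is indispensable, for otherwise the CK2 expansion would be an infinite sum, unavailable as a single step in the congruence closure.
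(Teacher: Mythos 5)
Your proof is correct in substance and follows essentially the same route as the paper's: decompose via the local units of Proposition~2.5, observe that at a sink both components are already real, use (CK2) at regular vertices to write $\alpha v=\sum_{e\in s^{-1}(v)}(\alpha e)e^{*}$, and note via Remark~2.7 that right multiplication by an edge strictly lowers the ghost length. The paper packages this as a minimal-counterexample argument, minimizing first the total number of monomials $k+l$ and then the tuple $(|q_{1}|,\dots,|q_{k}|,|\delta_{1}|,\dots,|\delta_{l}|)$ in lexicographic order, whereas you run a direct induction on the single quantity $\max_i|q_i|$; your bookkeeping is leaner and the tracking of the number of monomials turns out to be unnecessary here. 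One imprecision to repair: Proposition~2.4(3) describes the shape of monomials but does not furnish a \emph{canonical} expression for a general element, and representations as sums of $\lambda pq^{*}$ are not unique (e.g.\ $v=\sum_{e\in s^{-1}(v)}ee^{*}$ rewrites a real element with ghost terms), so ``$c(x):=\max|q|$ over the canonical expression'' is not well defined as stated. The fix is routine: either define $c(x)$ as the minimum of $\max|q|$ over all representations, or simply phrase the induction as ``every pair $(\alpha,\beta)\in\rho$ admitting representations with all $|q_i|\le n$ lies in $\rho'$,'' carrying a fixed representation through the argument; with either reading your base case ($c=0$ iff real) and the descent $c(\alpha e)\le n-1$ go through unchanged.
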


\begin{proof}
  Let $\tau$ be the congruence on $L_{\mathbf{B}}(\Gamma)$ generated by
  $\rho_{real}$, then the inclusion $\tau \subseteq \rho$ is obvious.
  Suppose that $\tau \ne \rho$, i.e., there exists $(x, y) \in \rho$
  with $(x, y) \notin \tau$.  By Proposition~2.5 we may choose a
  finite subset $F \subseteq V$ such that $x = x \sum_{v \in F} v$ and
  $y = y \sum_{v \in F} v$, and therefore
  \[ (x, y) = ( x \smsum_{v \in F} v, y \smsum_{v \in F} v ) =
  \smsum_{v \in F} ( x v, y v) . \]
  Since $(x, y) \notin \tau$, there exists $v \in F$ such that $(x v, y v)
  \notin \tau$, and we have $(x v, y v) \in \rho$.  Therefore, we may assume
  that $x = \sum_{i=1}^{k} p_{i} q_{i}^{\ast}$ and $y = \sum_{j=1}^{l}
  \gamma_{j} \delta_{j}^{\ast}$ with $p_{i}, q_{i}, \gamma_{j}, \delta_{j}$
  paths in~$\Gamma$ and $r(q_{i}^{\ast}) = r(\delta_{j}^{\ast}) = v$ for
  all~$i,j$.  Among all such pairs $(\sum_{i=1}^{k} p_{i} q_{i}^{\ast} ,\,
  \sum_{i=1}^{l} \gamma_{i} \delta_{i}^{\ast}) \in \rho \setminus \tau$ with
  minimal $d:=k+l$, select one for which $(|q_{1}|, \dots, |q_{k}|,
  |\delta_{1}|, \dots, |\delta_{l}|)$ is the smallest in the lexicographic 
  order of~$\mathbb{N}^{d}$. As $(x,y) \notin \tau$, one has $|q_{i}| > 0$ 
  for some~$i$, or $|\delta_{j}|>0$ for some~$j$. For all $e \in E$,
  \[ (xe,ye) = (\smsum_{i=1}^{k} p_{i}q_{i}^{\ast}e,
  \smsum_{i=1}^{l} \gamma_{i}\delta_{i}^{\ast}e)
  = (\smsum_{i=1}^{k'} p_{i}' (q_{i}')^{\ast},
  \smsum_{i=1}^{l'} \gamma_{i}' (\delta_{i}')^{\ast}) , \]
  and either $d' := k' + l' < d$, or $d' = d$ and $(|q_{1}'|, \dots, |q_{k}'|,
  |\delta_{1}'|, \dots, |\delta_{l}'|)$ is smaller than $(|q_{1}|, \dots, 
  |q_{k}|, |\delta_{1}|, \dots, |\delta_{l}|)$, whence $(xe,ye) \in \tau$,
  by minimality.  As some $|q_{i}| > 0$ or some $|\delta_{j}| > 0$, 
  it follows that~$v$ is not a sink, and hence, $(x,y) = (x v,y v) = 
  (x \sum_{e\in s^{-1}(v)}ee^{\ast}, y \sum_{e\in s^{-1}(v)}ee^{\ast}) = 
  \sum_{e\in s^{-1}(v)} ((xe)e^{\ast}, (ye)e^{\ast}) \in \tau$, contradicting 
  that $(x,y) \notin \tau$.  This shows that $\rho = \tau$.
\end{proof}

The following result, being an $\mathbf{B}$-algebra analog of
\cite[Theorem~3.11]{ap:tlpaoag05}, characterizes the congruence-simple
Leavitt path algebras over the Boolean semifield $\mathbf{B}$.

\begin{thm}
  A Leavitt path algebra $L_{\mathbf{B}}(\Gamma)$ of a row-finite
  graph $\Gamma = (V,E,s,r)$ is congruence-simple if and only if the
  graph~$\Gamma$ satisfies the following two conditions:
  
  (1) The only hereditary and saturated subset of~$V$ are~$\varnothing$
  and~$V$;
  
  (2) Every cycle in~$\Gamma$ has an exit.
\end{thm}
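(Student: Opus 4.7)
The direction $\Longrightarrow$ is immediate from Proposition~4.1, parts (2) and (3).

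For $\Longleftarrow$, assume conditions (1) and (2); by Theorem~3.4 applied with the semifield $\mathbf{B}$, the algebra $L := L_{\mathbf{B}}(\Gamma)$ is ideal-simple. For any congruence $\rho$ on $L$, the zero-class
\[ K(\rho) := \{\, x \in L : (x, 0) \in \rho \,\} \]
is an ideal of $L$; if $K(\rho) \neq 0$, ideal-simpleness forces $K(\rho) = L$, and hence $(x, 0) \in \rho$ for every $x$, so $\rho = L^2$. The plan is therefore to show that any nontrivial congruence $\rho$ satisfies $K(\rho) \neq 0$.

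Suppose $\rho \neq \Delta$. By Proposition~4.4, $\rho_{\text{real}} \neq \Delta$, so there exist distinct real polynomials $\alpha, \beta$ with $(\alpha, \beta) \in \rho$. Since $1+1=1$ in $\mathbf{B}$, adding $(\alpha, \alpha)$ gives $(\alpha, \alpha + \beta) \in \rho$, and after relabeling we may assume $\alpha \lneq \gamma := \alpha + \beta$ in the Boolean order on sums of distinct monomials. A standard semilattice-congruence fact---if $a \leq c \leq b$ and $(a, b) \in \rho$ then also $(a, c) \in \rho$, obtained by adding $(c, c)$ to $(a, b)$ and using transitivity---reduces us further to the case $\gamma = \alpha + m$ for a single monomial $m$, a real path not occurring in $\alpha$. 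Multiplying $(\alpha, \gamma) \in \rho$ on the left by $m^*$ and on the right by $v := r(m)$ yields, via Remark~2.7, the pair $(u, u + v) \in \rho$ with $u := m^* \alpha v$; and since $m \notin \alpha$, the identity $m^* q = v$ (which would force $q = m$) fails for every $q$ in $\alpha$'s support, so $v \not\leq u$ and the derived pair is nontrivial.

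It remains to extract a nonzero element of $K(\rho)$ from $(u, u + v)$. If $u = 0$, we have $(0, v) \in \rho$ at once. Otherwise, by Remark~2.7 the monomials of $u$ are real paths $q'$ arising from $\alpha$-monomials $q = m q'$ and co-paths $(q'')^*$ arising from $\alpha$-monomials $q$ with $m = q q''$. We plan to right-multiply the pair by an element $f \in L$ with $uf = 0$ and $vf \neq 0$, yielding $(0, vf) \in \rho$. When the support of $u$ contains no cycles at $v$, an edge $f \in s^{-1}(v)$ distinct from the first edges appearing in the terms of $u$ does the job. The main obstacle is the case when $\alpha$ contains paths $m q'$ with $q' \in \text{CP}(v)$, so that $u$ carries cycle-at-$v$ contributions; here condition~(2) is decisive. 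Choosing a simple cycle $c \in \text{CSP}(v)$ and an exit $f$ of $c$ at some $s(e_j)$, we replace the single edge by the exit product involving $(c^*)^n$, $c^n$, and $z_0 = e_1 \dots e_{j-1} f$ constructed exactly as in the proof of Lemma~3.2; the defining identity $z_0^* c = 0$ annihilates the cycle contributions of $u$, and a minimality argument in the style of the proof of Proposition~4.4 terminates the reduction. Once a nonzero element of $K(\rho)$ is produced, ideal-simpleness of $L$ completes the proof.
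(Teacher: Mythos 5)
Your overall strategy coincides with the paper's: reduce to the real part via Proposition~4.3 (which you miscite as~4.4), use additive idempotency of $\mathbf{B}$ to pass to a pair $(\alpha,\alpha+m)$ with $m$ a single path, manufacture a pair $(0,x)\in\rho$ with $x\ne 0$, and finish with the ideal $\{x:(0,x)\in\rho\}$ together with Theorem~3.4 --- this endgame is verbatim the paper's. Your one genuine variation is at the extraction stage: you left-multiply by $m^*$ at once and propose to handle all resulting terms with a single exit argument, whereas the paper first localizes to $v(\cdot)w$, reduces to closed paths based at one vertex, and then splits into two cases (exactly one closed simple path at $v$, handled with an exit as you do; at least two, handled by conjugating with the second closed simple path $d$ via $c^*d=0=d^*c$). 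Your unified treatment can be made to work and is arguably cleaner, but as written the decisive step is a plan rather than a proof.

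Concretely: (i) after multiplying by $m^*$ on the left and by $v=r(m)$ on the right, \emph{every} surviving monomial of $u=m^*\alpha v$ is either a closed path based at $v$ of positive length or the star of one; so your intermediate case ``the support of $u$ contains no cycles at $v$'' is vacuous --- and just as well, since the edge $f\in s^{-1}(v)$ you invoke there need not exist (e.g.\ when $v$ emits a single edge). (ii) The key computation is only gestured at: you must first pass to $(c^*)^n u\, c^n$ for $n$ large, which sends the real terms to powers $c^{k}$ and the ghost terms $(q'')^*$ to powers $(c^*)^{r}$ (using that a closed path at $v$ which is a prefix of $c^n$ must be a power of $c$, since $c$ is closed simple); then \emph{both} $z^*c=0$ and $c^*z=0$ are needed under $z^*(\cdot)z$ --- the identity $z^*c=0$ alone does not dispose of the $(c^*)^r$ terms. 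Once this is done the pair collapses directly to $(0,r(f))$; no ``minimality argument in the style of Proposition~4.4'' is needed or available here, and appealing to one suggests the crux was not actually checked. (iii) The nontriviality $v\not\le u$ you argue for is never needed: the conclusion only uses the computed value $z^*(c^*)^n u\, c^n z=0$ together with $r(f)\ne 0$ from Proposition~2.4. With these points supplied, your argument is a correct, slightly streamlined version of the paper's proof.
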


\begin{proof}
  $\Longrightarrow$. It follows from Proposition~4.1.

  $\Longleftarrow$. Let $\rho \neq \Delta_{L_{\mathbf{B}}(\Gamma)}$ be a
  congruence on $L_{\mathbf{B}}(\Gamma)$. Then, by Proposition~4.3, $\rho$
  is generated by $\rho_{real} := \rho \cap (L_{\mathbf{B}}(\Gamma)_{real})^{2}$
  and $\rho_{real} \neq \Delta_{L_{\mathbf{B}}(\Gamma)_{real}}$.  Hence,
  there exist two elements $a, b \in L_{\mathbf{B}}(\Gamma)_{real}$ such that
  $a\neq b$ and $(a,b) \in \rho$.  We claim that there exists a nonzero
  polynomial $x \in L_{\mathbf{B}}(\Gamma)$ in only real edges such that
  $(0,x) \in \rho$.

  It is clear that $L_{\mathbf{B}}(\Gamma)$ is an additively idempotent
  hemiring, \textit{i.e.}, $L_{\mathbf{B}}(\Gamma)$ is a partially ordered
  hemiring with its unique partial order defined as follows: $s \leq s'
  \Longleftrightarrow s + s' = s'$.  Whence, $(a,a+b) = (a+a,a+b) \in \rho$,
  $(b,a+b) = (b+b,a+b) \in \rho$, and since $a \ne b$, either $a < a+b$ or
  $b < a+b$.  Thus, keeping in mind that $(a+x,b+x) \in \rho$ for all 
  $x \in L_{\mathbf{B}}(\Gamma)$ and without loss of generality, one may 
  assume that $a < a + b$ and $a$, $a+b$ are written in the form
  \[ a = p_{1} + \dots + p_{n}, \quad
  a+b = p_{1} + \dots + p_{n} + p , \]
  where $p_{1}, \dots,  p_{n}, p$ are distinct paths in $\Gamma$.
  Moreover, we may choose~$a$ having the minimal number~$n$ of such
  $\{p_{1} , \dots,  p_{n}\}$.

  Let $v := s(p)$, $w := r(p) \in V$.  Then $(v a w, v (a+b) w) \in
  \rho$, where $v a w = v p_1 w + \dots + v p_n w$ and $v b w = v p w
  = p$, hence by minimality we may assume that $s(p_i) = v$ and
  $r(p_i) = w$ for all $i = 1, \dots, n$.

  Suppose that $v \ne w$.  Write $p = q p'$, where~$q$ is a path
  from~$v$ to~$w$ of shortest length and~$p'$ is a closed path based
  at~$w$.  Taking into account Remark~2.7, for every~$p_{j}$ such that
  $q^{\ast} p_{j} \ne 0$ we have $p_{j} = q p_{j}'$ for some closed
  path~$p_{j}'$ based at~$w$.  Then we have $(q^{\ast} a, q^{\ast} (a+b))
  = (q^{\ast} p_{1} +\dots+ q^{\ast}p_{n}, q^{\ast}p_{1} +\dots+ q^{\ast}p_{n}
  + q^{\ast}p) = (\sum_{j} p_{j}', \sum_{j} p_{j}' + p') \in \rho$.  
    Therefore, without loss of generality, we may assume that $v = w$,
  \textit{i.e.}, $p, p_{1} , \dots, p_{n}$ are distinct closed paths
  based at~$v$, and consider the following two possible cases.

  \emph{Case~1.} There exists exactly one closed simple path based at~$v$,
  say $c := e_{1} \dots e_{m}$.  It follows that~$c$ is actually a cycle, and
  by condition~(2), $c$ has an exit~$f$, \textit{i.e.}, there exists
  $j \in \{1, \dots, m\}$ such that $e_{j}\neq f$ and $s(f)=s(e_{j})$.  Then,
  there are some distinct positive integers $k$, $k_{i}$, $i=1, \dots, n$,
  such that $p =c^{k}$ and $p_{i} = c^{k_{i}}$, $i=1, \dots, n$, and let
  \begin{gather*}
    x := (c^{\ast})^{k}a = (c^{\ast})^{h_{1}} +\dots+ (c^{\ast})^{h_{r}}
    + c^{h_{r+1}} +\dots+ c^{h_{n}} \\
    y := (c^{\ast})^{k}(a+b) = (c^{\ast})^{h_{1}} +\dots+ (c^{\ast})^{h_{r}}
    + c^{h_{r+1}} +\dots+ c^{h_{n}} + v.
  \end{gather*}%
  Obviously, $(x,y)\in \rho$, and therefore, $(0, r(f)) = (z^{\ast} x z,
  z^{\ast} y z ) \in \rho$ for $z := e_{1} \dots e_{j-1} f$.

  \emph{Case~2.} There exist at least two distinct closed simple paths
  based at~$v$, say~$c$ and~$d$, and we have $c^{\ast} d = 0 = d^{\ast} c$
  by Remark~2.7.  Note that $(p^{\ast} a, p^{\ast} (a+b) ) \in \rho$
  and let
  \begin{gather*}
    x := p^{\ast} a = q_{1}^{\ast} +\dots+ q_{s}^{\ast} + q_{s+1} +\dots+ q_{n} \\
    y := p^{\ast}(a+b) = q_{1}^{\ast} +\dots+ q_{s}^{\ast} + q_{s+1} +\dots+ q_{n}
    + v ,
  \end{gather*}%
  where $q_{1}, \dots, q_{n}$ are closed paths in~$\Gamma$ based at~$v$.
  Then for some $k\in \mathbb{N}$, where $|c^{k}| > \max \{|q_{1}| , \dots, 
  |q_{n}|\}$, we get $x' := (c^{\ast})^{k} x c^{k} = (c^{\ast})^{k}
  q_{1}^{\ast}c^{k} +\dots+ (c^{\ast})^{k} q_{s}^{\ast}c^{k} +
  (c^{\ast})^{k}q_{s+1}c^{k} +\dots+ (c^{\ast})^{k}q_{n}c^{k}$ and
  $y' := (c^{\ast})^{k} y c^{k} = (c^{\ast})^{k}q_{1}^{\ast}c^{k}
  +\dots+ (c^{\ast})^{k} q_{s}^{\ast}c^{k} + (c^{\ast})^{k}q_{s+1}c^{k}
  +\dots+ (c^{\ast})^{k}q_{n}c^{k} + v$, and $(x',y') \in \rho$.
  If $(c^{\ast})^{k} q_{i}^{\ast}c^{k} = 0 = (c^{\ast})^{k} q_{j}c^{k}$
  for all $i=1, \dots, s$ and $j=s+1, \dots, n$, then $(0,v) = (x',y')
  \in \rho$.  Note that if $(c^{\ast})^{k} q_{j}c^{k} \neq 0$, then
  $(c^{\ast})^{k} q_{j}\neq 0$, and as $|c^{k}| > |q_{j}|$, $c^{k}
  = q_{j} q_{j}'$ for some closed path $q_{j}'$
  based at~$v$.  Whence, $q_{j} = c^{r}$ for some positive integer
  $r\leq k$.  Similarly, in the case $(c^{\ast})^{k} q_{i}^{\ast}c^{k}
  \neq 0$, we get that $q_{i}^{\ast} = (c^{\ast})^{s}$ for some positive
  integer $s\leq k$.  Since $c^{\ast} d = 0 = d^{\ast} c$, for every $i,j$, one
  gets $d^{\ast} (c^{\ast})^{k} q_{i}^{\ast} c^{k} d = 0 = d^{\ast} (c^{\ast})^{k}
  q_{j} c^{k} d$, and hence, $(0,v) = (d^{\ast} x' d,
  d^{\ast} y' d) \in \rho$.
  
  Finally, let us consider the ideal of $L_{\mathbf{B}}(\Gamma)$ defined as
  follows:
  \[ I := \{ x\in L_{\mathbf{B}}(\Gamma) \mid (0,x) \in \rho \} . \]
  From the observations above, $I$ contains a nonzero polynomial in
  only real edges. By our assumption and Theorem 3.4, $L_{\mathbf{B}}
  (\Gamma)$ is an ideal-simple hemiring, and hence, $I = L_{\mathbf{B}}
  (\Gamma)$. It immediately follows that $\rho = L_{\mathbf{B}}(\Gamma)^{2}$,
  which ends the proof.
\end{proof}

Combining Proposition~4.1, Theorem~4.4 and \cite[Theorem~3.11]%
{ap:tlpaoag05}, we obtain a complete characterization of the
congruence-simple Leavitt path algebras $L_{S}(\Gamma)$ of row-finite
graphs~$\Gamma$ over commutative semirings.

\begin{thm}
  A Leavitt path algebra $L_{S}(\Gamma)$ of a row-finite graph $\Gamma
  = (V,E,s,r)$ with coefficients in a commutative semiring~$S$ is
  congruence-simple if and only if the following three conditions are
  satisfied:
  
  (1) $S$ is either a field, or the Boolean semifield~$\mathbf{B}$;

  (2) The only hereditary and saturated subset of~$V$ are~$\varnothing$
  and~$V$;
  
  (3) Every cycle in~$\Gamma $ has an exit.
\end{thm}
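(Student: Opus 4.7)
The statement is essentially a synthesis of the results already developed in the paper, so my plan is to organize the proof as a straightforward case analysis.

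For the forward implication, I would simply invoke Proposition~4.1: assuming $L_{S}(\Gamma)$ is congruence-simple, parts (1), (2) and (3) of that proposition give respectively conditions (1), (2) and (3) of the theorem. No row-finiteness is needed here, so this direction is essentially free.

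For the backward implication, assume the three conditions hold. I would split along condition~(1). In the case $S = \mathbf{B}$, conditions (2) and (3) are precisely the hypotheses of Theorem~4.4, which yields congruence-simpleness of $L_{\mathbf{B}}(\Gamma)$ directly. In the case that $S$ is a field, I would appeal to the classical characterization \cite[Theorem~3.11]{ap:tlpaoag05} (applied in the row-finite setting) together with the paper's Theorem~3.4: conditions (2) and (3) guarantee that $L_{S}(\Gamma)$ is ideal-simple. Since $L_{S}(\Gamma)$ is then a ring (because $S$ is a ring), and for rings the concepts of ideal-simpleness and congruence-simpleness coincide (as emphasized in Section~2, immediately after the definitions of the two notions), we conclude that $L_{S}(\Gamma)$ is congruence-simple.

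The only subtle point to highlight carefully is the equivalence of ideal- and congruence-simpleness in the field case, since throughout the paper the author repeatedly stresses that these two notions diverge in the genuinely semiring setting. So the plan is to invoke explicitly the remark that for rings (equivalently, whenever the ground semiring~$S$ is a ring and hence $L_{S}(\Gamma)$ is a ring), every ideal corresponds bijectively to a congruence via $I \mapsto \{(a,b)\mid a-b\in I\}$, collapsing the two notions. There is no genuine obstacle here: the whole proof is a bookkeeping exercise that routes through Proposition~4.1, Theorem~3.4, Theorem~4.4, and the classical ring-theoretic result. The hardest conceptual work has already been done in establishing Theorem~4.4; the present theorem simply packages it together with the field case.
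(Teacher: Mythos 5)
Your proposal is correct and follows essentially the same route as the paper, whose entire proof consists of the single sentence ``Combining Proposition~4.1, Theorem~4.4 and \cite[Theorem~3.11]{ap:tlpaoag05}, we obtain\dots''; your case split on condition~(1) and your explicit remark that ideal- and congruence-simpleness coincide when $S$ is a field (so that $L_{S}(\Gamma)$ is a ring) just spell out what the paper leaves implicit.
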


In light of \cite[Theorem~3.1]{ap:tlpaoag08}, \cite[Theorem~6.18]%
{tomf:utaisflpa} and \cite[Theorem~3.11]{g:lpaadl}, and to stimulate
an interest of some potential readers in research in this, in our
view, quite interesting and promising direction, we post the
following\medskip

\noindent \textbf{Conjecture}. Theorem~4.5 is true for the Leavitt path
algebras $L_{S}(\Gamma)$ over an arbitrary graph $\Gamma$. \medskip

As was done in the previous section, we end this section and the paper
by re-establishing the congruence-simpleness of the Leavitt path
algebras of Examples~2.3.

\begin{exas}[{\textit{cf.}~\cite[Corollary~3.13]{ap:tlpaoag05}}]
  We can re-establish the congruence-simpleness of the algebras given
  in Example~3.2 above.

  (i) By \cite[Corollary~4.8]{knt:mosssparp}, $M_{n}(\mathbf{B})$ is
  congruence-simple.  However, this fact can be also justified by
  Theorem~4.5, since it is easy to check that the graph~$\Gamma$ of
  Examples~2.3\,(i) satisfies~(1) and~(2) of Theorem~4.5.

  (ii) By Examples~2.3\,(ii), the Laurent polynomial algebra
  $\mathbf{B}[x,x^{-1}]\cong L_{S}(\Gamma)$ where the graph $\Gamma$
  contains a cycle without an exit, and therefore, by Theorem~4.5,
  $\mathbf{B}[x,x^{-1}]$ is not congruence-simple.

  (iii) By Examples~2.3\,(iii), the Leavitt algebras $L_{\mathbf{B}}(1,n)$
  for $n \ge 2$ are isomorphic to the Leavitt path algebras
  $L_{\mathbf{B}}(\Gamma)$ such that for the graphs~$\Gamma$
  conditions~(1) and~(2) of Theorem~4.5 are obviously satisfied, and
  therefore, the algebras $L_{\mathbf{B}}(\Gamma)$ are congruence-simple.
\end{exas}


\end{document}